\documentclass[10pt,eqno]{article}
\usepackage{latexsym}
\usepackage{amsfonts,amsmath,amssymb}                                                                                                                                                                                                                                                                                                                                                                                                                                                                                                                                                                                      
\usepackage{amsthm}
\newtheorem{theorem}{Theorem}[section]
\newtheorem{lemma}{Lemma}[section]

\usepackage[pdftex]{graphicx}
\usepackage[latin1]{inputenc}
\usepackage{epstopdf}
\usepackage{setspace}
\usepackage{tikz}
\usepackage[toc]{appendix}

\numberwithin{equation}{section}
\numberwithin{theorem}{section}

\numberwithin{proposition}{section}
\numberwithin{lemma}{section}
\numberwithin{remark}{section}
\numberwithin{equation}{section}
\begin{document}
\doublespacing
\setcounter{secnumdepth}{3}
\newcommand{\bbox}{\vrule height.6em width.6em
depth0em} 
\parindent0in
\def\lhead{ vv }
\thispagestyle{empty}
\setcounter{page}{1}
\noindent
\def\lhead{vv }
\def\rhead{ Holder}
\thispagestyle{empty}
\setcounter{page}{1}
\noindent
\begin{center}
{\bf \large { H\"older estimates  of weak solutions to  degenerate chemotaxis systems with a  source term.}} \\
\vspace*{1cm} 
{\bf 
{ M.Marras \footnote{Dipartimento di Matematica e Informatica, Università di Cagliari, via Ospedale 72, 09124 Cagliari (Italy), mmarras@unica.it},
F.Ragnedda \footnote{ Facoltà di Ingegneria e Architettura,  Università di Cagliari, Viale Merello 92, 09123 Cagliari,  ragnedda@unica.it}, 
S.Vernier-Piro \footnote{Facoltà di Ingegneria e Architettura,  Università di Cagliari, Viale Merello 92, 09123 Cagliari (Italy), svernier@unica.it} 
 and V.Vespri \footnote
{Dipartimento di Matematica ed Informatica "U. Dini'', Università di Firenze, viale Morgagni 67/a, 50134 Firenze (Italy)
vincenzo.vespri@unifi.it}  }}\\
\end{center}
$$ \quad             In \ memory \ of \  our \ friend \  Emmanuele \  DiBenedetto$$

\vspace*{1cm}
\thispagestyle{empty}
\setcounter{page}{1}
\noindent

\begin{abstract}
In this note we consider degenerate chemotaxis  systems with porous media type  diffusion and  
a source term  satisfying  the Hadamard growth condition. We prove the  H\"older regularity for bounded solutions to parabolic-parabolic as well as for elliptic-parabolic chemotaxis systems.
\end{abstract}
\vskip.2truecm
\noindent{\bf Keywords:} Chemotaxis systems, degenerate parabolic equations, elliptic equations, H\"older regularity.\\
\noindent{\bf AMS Subject Classification:} {92C17, 35K65, 35J70, 35B65}

\section{Introduction}
Let us consider the following class of degenerate chemotaxis  systems  \begin{equation}
\label{first equ}
\left\{ \begin{array}{l}
u_{t}=
\textrm{div}(\nabla u^m) 
   - \chi \textrm{div}(u^{{\mathfrak q}-1}  \nabla v) + B(x,t,u, \nabla u),   \  
   {\rm in} \  \mathbb{R}^N\! \times \! (t>0),    \\[6pt]
{\tilde  \tau} v_{t}=  \Delta v- a v+  \ u ,  \quad \quad \quad \quad \quad 
\quad \quad \quad \quad \quad \quad \quad \quad  {\rm in} \  \mathbb{R}^N\! \times \! (t>0),  \\[6pt]
u(x,0)=u_0(x)\geq 0, \ \  v(x,0)=v_0(x)\geq 0, \quad\quad \quad \quad    {\rm in} \ \ \mathbb{R}^N,
\end{array} \right.
\end{equation}
with  $N\geq2, m \geq 1, {\mathfrak q}\geq  \max\{\frac {m+1}2,  2\}$ and  $\ a, \chi>0$.  The constant $\tilde \tau$ is taken nonnegative.  When the constant  $ \tilde \tau =0 $ we are in the parabolic-elliptic case and,  when   $ \tilde \tau>0$, we are in the parabolic-parabolic case. In the latter case, WLOG, we may assume  $ \tilde \tau =1 $ . The  initial data $(u_0(x), v_0(x))$ satisfy
\begin{equation}
\label{init data}
\left\{ \begin{array}{l}
u_0(x) \geq 0 , \  u_0(x) \in L^{\infty}(\mathbb{R}^N) \cap L^1(\mathbb{R}^N),  \  u_0^m \in H^1,  \\[6pt]
 v_0(x) \geq 0, \  v_0(x) \in L^{1}(\mathbb{R}^N) \cap W^{1,p}(\mathbb{R}^N),
\end{array} \right.
\end{equation}
with $1<p<\infty$.

 A pair $(u,v)$ of non negative measurable functions defined in $\mathbb{R}^N\times [0,T], \ T>0$  is a local weak solution to  \eqref{first equ} if
\begin{equation*}
\begin{aligned} 
\! u\in L^{\infty}(0,T;L^p(\mathbb{R}^N)), \ u^m\in L^{2 }(0,T;H^1(\mathbb{R}^N)), \ v\in L^{\infty}(0,T; H^1(\mathbb{R}^N)), 
\end{aligned}
\end{equation*}
 and $(u,v)$ satisfies \eqref{first equ} in the sense that for every compact set ${\cal{K}} \subset \mathbb{R}^N$ and  every time interval
 $[t_1, t_2] \subset [0,T]$ one has
 \begin{equation}
 \label{weaku}
 \begin{aligned} 
&\int_{{\cal{K}}  } u \psi  dx \Bigg|_{t_1}^{t_2} \! + \! \int_{t_1}^{t_2}  \!\int_{{\cal{K}}} \! \Big[ -u \psi _t + ( \nabla u^m, \nabla \psi ) - \chi u^{\mathfrak q-1} 
( \nabla v, \nabla \psi )
    \Big] dx \ dt \\
& =  \int_{t_1}^{t_2}   \int_{{\cal{K}}} B(x,t,u, \nabla u)
 \psi \ dx dt;
 \end{aligned}
\end{equation}

 \begin{equation}
 \label{weakv}
 \begin{aligned} 
 & \! \! \int_{{\cal{K}}} \! \tilde \tau v \psi   dx \Bigg|_{t_1}^{t_2} \! + \!  \int_{t_1}^{t_2} \! \! \int_{{\cal{K}}} \! \Big[ \! - \tilde \tau v \psi _t + ( \nabla v, \nabla \psi )
   \Big] dx dt \! = \!\! \int_{t_1}^{t_2} \! \! \int_{{\cal{K}}} \! (-a v+u) \psi  \ dx dt, \\
\end{aligned}
\end{equation}
for all locally bounded non negative testing function 
$\psi \in W^{1,2}_{loc}(0,T;L^2({\cal{K}}))\cap  L^{p}_{loc}(0,T; W^{1,p}_{0}
({\cal{K}})).$\\

In the last years,  there was a growing interest in the chemotaxis systems. We recall that  Keller and Segel in the seminal paper  \cite{KS} proposed a mathematical model describing the aggregation process of
amoebae by chemotaxis. For such a reason, nowadays, such kind of systems are named Keller-Segel  in their honour.
Recently many authors studied systems with porous medium-type diffusion and with a power factor in the drift term (see   \cite {IY}, \cite {IY2}, \cite{IY3},  \cite{KL}, \cite{SK} and the references therein).
In this note,  we consider  a degenerate  chemotaxis model   with porous media type  diffusion  with $m>1$. When  $m=1,\ {\mathfrak q}=2, \ B=0$,  the system \eqref{first equ} is reduced to the classical Keller-Segel system.
 In our model,  the diffusion of the cells  ($\textrm{div}(\nabla u^m) $) depends only on own density and degenerates when $ u=0$,
$m$ denotes the intensity of diffusion and the exponent  $ {\mathfrak q}$ in the drift term takes in account the nonlinear aspects of the biological phenomenon. Moreover  we assume $\chi>0$ which  means that   the cells move toward the increasing signal concentration (chemoattractant).  For sake of  simplicity, we take  $\chi=1$.\\
 This model relies on the presence of the 
 the source term  $B$  which describes the growth of the cells. Some   experimental evidences (see \cite{BBTW}) show that  $B$ is a nonlinear term and 
   satisfies {\it natural} or Hadamard growth condition, more precisely  it  satisfies the inequality  (see  \cite{DIB},  \cite{DUV})
\begin{equation}\label{source} 
 |B(x,t,u, \nabla u)|     \leq C|\nabla u^m|^2 + \phi(x,t), \quad C>0,
\end{equation}
 with $\phi(x,t)$  in the parabolic space $ L^{q,r}_{\mathbb{R}^N\times (t>0)}=L^r(0,\infty; L^q(\mathbb{R}^N)).$ The presence of this term makes more challenging  the math approach of  this system  (see  the monograph of Giaquinta \cite{G}).\\

In literature a great part of the results concerns the case $B=0$ where, depending upon the choice of $m$  and $q$, it is  possible to  find initial data  for which we have global existence and initial data for which  blow up in finite time occurs
 (see \cite {IY},  \cite {IY2},  \cite {IY3}, 
 \cite {W}  and references therein).
To our knowledge, in   the more general case $B\neq 0$,   the global existence of the solutions and the blow up phenomenon are not still studied in fully detail. Hence  this argument will be occasion  of our next future studies. For the above  reason, we will work  in a bounded time interval $[0,T]$ i.e.  before the eventual  blow-up time, assuming, therefore, that the solution $u$ remains bounded.\\ 

 For the  solutions to \eqref{first equ} 
with $B=0$ and $\tilde \tau=1,$ Ishida  and  Yokota  in  \cite{IY}
proved that a weak solution exists globally  when ${\mathfrak q} < m + {\frac 2N}$ without restriction on the size of initial data, improving both Sugiyama (\cite{SK2})  and Sugiyama and Kunii results (\cite{SK})   where  $q \leq m$ was assumed.
In \cite{IY2},  the Authors  established the global existence of  weak solutions with small initial data   when ${\mathfrak q} \geq  m + {\frac 2N}$, while 
in \cite{W2} Winkler proved that
there are initial data such that if  ${\mathfrak q} \geq  m + {\frac 2N}$ the solution blows up in finite time. Moreover, in \cite{IY3} uniform boundedness of nonnegative solutions  was derived assuming  ${\mathfrak q} < m + {\frac 2N}$. \\
If $B=0$ and $ \tilde \tau=0,$   the existence in large of the solutions  was proved in the case of    $ m> {\mathfrak q}- \frac  2N $  without  any restrictions on initial data and in the case  $1\leq m \leq {\mathfrak q}- \frac 2N $   only for small initial data (\cite{SK}).  For (local and global)  existence and nonexistence of solutions to  different classes of Keller-Segel type system we refer to \cite{BBTW}.\\
If $B\neq 0$ and $ \tilde \tau=0, m=1,q=2$, in  \cite{MVp} the authors investigated blow-up phenomena  and  obtained a safe time interval  of existence for the solutions  by deriving a lower bound of the blow-up time. \\
In  \cite{KL},  following the De Giorgi approach,
Kim and Lee 
proved regularity and uniqueness  results for solutions  to  degenerate  chemotaxis  parabolic-parabolic system  assuming that the source term is vanishing.

In this paper  we focus our attention only on 
 the local H\"older regularity  of  the solution $(u,v).$ 
 More precisely, we give a unitary  and more organic proof 
 that allows us   to treat in the same framework a more general equation 
(with source term)  either in  the parabolic-parabolic case  and  in the parabolic-elliptic case.\\
 
 Our approach is based on 
 suitable a-priori estimates on the function $v$ that solves the second equation of \eqref{first equ}  and  on   De Giorgi-DiBenedetto approach ( \cite{degiorgi}, \cite{DIB},  \cite{DBGVb}) for proving regularity of $u^m$. The regularity of $v$,  either for $\tilde \tau=1$ or  $\tilde \tau=0$,
follows in a straightforward way  from classical results theory (see, for instance, \cite{DIB}, \cite{GT}, \cite{P}). The proof has however some remarkable differences from the classical approach by DiBenedetto. In this paper we focus our attention only on the real novelties. When the modifications are based only on technicalities, we will postpone  the proof in the Appendix and when it is a structural modification we quote the corresponding papers  by DiBenedetto.

 Our main result  is
 \begin{theorem}(Regularity)\\
\label{IntReg}
 Let $u$ be a locally bounded local weak solution of \eqref{first equ} and let
$B$ satisfy  \eqref{source},
then $(x,t) \rightarrow u(x, t)$ is H\"older continuous in $R^N\times (0,T)$ 
and  there exists $\alpha_o \in (0,1)$ such that,
  for every $\varepsilon >0$,  there exists a constant $\gamma(\varepsilon) >0$ such that
 
 \begin{equation*}\label{interior}
 | u^m(x_1, t_1) - u^m(x_2, t_2)| \leq \gamma(\varepsilon) (|x_1 - x_2|^{\alpha_o} + |t_1 - t_2|^{\frac{\alpha_o} 2}) ,
 \end{equation*}
for every pair of points $(x_1, t_1), (x_2, t_2) \in  R^N\times (\varepsilon,T)$.
 \end{theorem}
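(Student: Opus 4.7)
The plan is to follow the intrinsic scaling / De Giorgi--DiBenedetto framework for degenerate porous-medium equations, treating the drift $\chi\,\text{div}(u^{\mathfrak{q}-1}\nabla v)$ and the natural-growth source $B$ as lower-order structural perturbations. First I would transfer regularity to $v$ using the second equation of \eqref{first equ}: since $u$ is locally bounded and $v_0 \in W^{1,p}$, parabolic Calder\'on--Zygmund theory ($\tilde\tau=1$) and elliptic $W^{2,p}$-theory ($\tilde\tau=0$) deliver $\nabla v \in L^{s}_{loc}$ for every finite $s$, with a quantitative bound depending only on $\|u\|_\infty$ and the data. This is the role of the "suitable a-priori estimates on $v$" mentioned in the introduction; it handles both cases of $\tilde\tau$ uniformly and reduces the original system to a single quasilinear equation for $u$ with merely measurable lower-order terms.

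Second, I would recast the $u$-equation as $u_t = \text{div}\,\mathbf{A}(x,t,u,\nabla u) + B$ with $\mathbf{A} = \nabla u^m - \chi u^{\mathfrak{q}-1}\nabla v$ and verify the DiBenedetto structure conditions: the hypothesis $\mathfrak{q}\geq\max\{(m+1)/2, 2\}$ together with boundedness of $u$ and Young's inequality allows $u^{\mathfrak{q}-1}|\nabla v|$ to be absorbed into $|\nabla u^m|$ modulo a harmless perturbative term. The natural-growth source $|B| \leq C|\nabla u^m|^2 + \phi$ is handled by the Ladyzhenskaya--Ural'tseva exponential change of dependent variable, i.e.\ testing against $\bigl(e^{\lambda(u^m-k)_\pm}-1\bigr)\zeta^2$ with $\lambda$ large; this converts the bad quadratic gradient into a term absorbable on the left of the Caccioppoli inequality, while $\phi \in L^{q,r}$ appears as a Sobolev-subcritical forcing that is carried through the iterations unchanged. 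I would derive in this way both the local energy (Caccioppoli) estimate and the logarithmic estimate for $(u^m-k)_\pm$ inside intrinsic cylinders $Q(\theta,\rho) = K_\rho \times (t_0 - \theta\rho^2, t_0)$, with the waiting time $\theta$ chosen proportional to an appropriate power of the oscillation $\omega$ of $u^m$ so as to compensate for the degeneracy at $u=0$.

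Third, I would execute DiBenedetto's two-alternative argument on these intrinsic cylinders. Either (i) $u^m$ stays away from its supremum on a set of sufficiently large measure in $Q(\theta,\rho)$, in which case a De Giorgi iteration based on the Caccioppoli estimate reduces $\sup u^m$ by a definite amount; or (ii) the complementary measure estimate holds, and the logarithmic estimate together with the expansion-of-positivity lemma raises $\inf u^m$ by a definite amount. Either way one obtains $\operatorname{osc}_{Q(\theta/2,\rho/2)} u^m \leq (1-\eta)\,\omega$ for some $\eta\in(0,1)$ depending only on the data. Iterating over a geometric sequence of shrinking intrinsic cylinders produces a H\"older modulus for $u^m$ with some exponent $\alpha_o\in(0,1)$, and since $u$ is locally bounded and $m\geq 1$, H\"older continuity of $u$ itself follows from $|u^{1/m}(p) - u^{1/m}(q)| \leq C_M |u^m(p)-u^m(q)|^{\min(1/m,1)}$ on bounded sets; the $\varepsilon$-shift in the statement simply reflects that the argument is interior and so requires staying away from $t=0$.

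The main obstacle will be making the intrinsic scaling compatible with the \emph{simultaneous} presence of the cross-diffusion drift and the natural-growth source. Under the rescaling $u \mapsto u/\omega$, $(x,t) \mapsto (\rho x, \theta\rho^2 t)$, the perturbative terms pick up powers of $\omega$ and $\rho$ whose signs must be controlled; the assumption $\mathfrak{q} \geq \max\{(m+1)/2,\,2\}$ is precisely what makes the drift subcritical in this scaling, but it has to be checked that the exponential change of variable used to kill the quadratic gradient term respects the truncation structure and does not spoil the scaling of $\theta$ with $\omega$. Up to these verifications---which the authors indicate will be deferred to the Appendix---the argument is a structural adaptation of the DiBenedetto regularity theory to the Keller--Segel setting.
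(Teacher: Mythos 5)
Your proposal is correct in outline and follows the same architecture as the paper: a priori $L^p$ bounds for $\nabla v$ from maximal parabolic regularity (resp.\ elliptic $W^{2,p}$ theory when $\tilde\tau=0$) that depend only on $\|u\|_\infty$ and the data, followed by intrinsic scaling with $\theta$ proportional to a negative power $\theta_0^{-\alpha}$ of the oscillation, Caccioppoli and logarithmic estimates for the truncations $(k-u^m)_\pm$, and DiBenedetto's two alternatives (critical mass plus expansion of positivity in time on one side, the $t^*$-lemma plus logarithmic lemma plus a De Giorgi iteration near the supremum on the other). Your treatment of the drift also matches the paper's: Young's inequality splits $u^{\mathfrak q-1}\nabla v\cdot\nabla\psi$ into a piece absorbed by $|\nabla(k-u^m)_\pm\eta|^2$ and a piece $u^{2(\mathfrak q-1)}|\nabla v|^2$ controlled through the $L^{2q}$ bound on $\nabla v$ and the measure of the level set, with the condition $\mathfrak q\geq\max\{\frac{m+1}{2},2\}$ entering exactly where you predict, in the splitting of the exponent $\mathfrak q-1$.

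The one genuine divergence is your handling of the natural-growth source. You propose the Ladyzhenskaya--Ural'tseva exponential substitution, testing against $(e^{\lambda(u^m-k)_\pm}-1)\zeta^2$ to absorb the quadratic gradient term for arbitrary $C$. The paper instead tests with the plain truncation $-(k-u^m)\eta^2$ and absorbs $C\int|\nabla u^m|^2(k-u^m)\eta^2$ into the left-hand side directly, using that the truncation levels satisfy $(k-u^m)_+\leq \omega/2^{s_0}<1$ and explicitly ``choosing $C$ small enough'' (the constraint $C<1/2$ reappears in the second alternative). Your route is the more robust one: it removes any smallness restriction on the structure constant $C$ in \eqref{source}, at the price of a messier time-derivative term, since for the porous-medium structure $u_t$ is paired with a nonlinear function of $u^m$ and the resulting primitive $\Phi$-type integrals must be re-derived for the exponential weight before the fast-geometric-convergence and logarithmic lemmas can be applied. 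The paper's route keeps the energy identity in the clean form \eqref{EE} but buys this with an implicit hypothesis on $C$ that is not visible in the statement of the theorem. Both yield the same oscillation-reduction alternatives and hence the same H\"older conclusion.
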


The scheme of this paper is the following:
in Section \ref{pre} we present some preliminary lemmas  that will be used to prove our main results. 

In Sections \ref{first pp} and  \ref{second pp} we consider the parabolic-parabolic chemotaxis system.  We study the behavior of $u^m$ near the ess inf $u^m$($First \  Alternative$), 
and  near the ess sup $u^m$($Second  \ Alternative$) respectively and  combining these estimates  we have  the H\"older continuity of the solution.
The  Section \ref{Holder} is devoted  to  the study of the H\"older regularity for the parabolic-elliptic chemotaxis system.
 At the end,
 we  insert an Appendix, where  the proofs of a few of lemmas and technical arguments are collected.
 
\section{Preliminary results}
\label{pre}

In this section we present some results we will use in the sequel.								

In $\mathbb{R}^N$,  define the N-dimensional cube centered at the origin and wedge  $2R$:
$ K_R=\{ x\in \mathbb{R}^N / \underset{1\leq i \leq N}{\max} |x_i|<R\}
$ and let $ |K_R|$  be its measure. Let $Q_R(T)=:K_R\times [0,T]$.
Consider the parabolic space $L^{q,r}(Q_R(T))$
with the norm
$$
||w||_{L^{q,r}(Q_R(T))} \equiv \Big(\int_0^T\Big( \int_{K_R } |w|^qdx \Big)^{\frac rq }    dt \Big)^{\frac 1r } <\infty.
$$
Moreover, for  $p>1$, $w$ belongs to the space 
$$
V^p(Q_R(T)) \equiv L^{\infty }(0,T;L^p(K_R))\cap L^p(0,T;W^{1,p}(K_R))
$$ 
 if \ 
$
||w||_{ V^p(Q_R(T) )} \equiv \underset{(0,T)}{\text{ess\,sup}}   ||w||_{L^p(K_R)} + \ || \nabla w||_{L^p(Q_R(T)) }<\infty.
$\\
Define 
 $V_0^p(Q_R(T)) \equiv L^{\infty }(0,T;L^p(K_R))\cap L^p(0,T;W_0^{1,p}(K_R))$.

The proof of the following lemmata can be found in the monograph \cite{DIB}.

\

  \begin{lemma}(Sobolev Lemma)
\label{sobolev}
Let $ {\bar \zeta(x,t)} $ be a cut off function compactly supported in a cube $K_R, \ R >0$ and let $u(x,t)$ be defined  in $\mathbb{R}^N \times (t_1,t_2)$,  for any $t_2>t_1>0.$ \ Then  
\begin{equation*}\label{sob1}
||\bar \zeta u ||_{L^{ \frac{2N}{N-2} } (\mathbb{R}^N)   }
    \leq  C ||\nabla( \bar \zeta u)||_{L^2(\mathbb{R}^N)};
\end{equation*}
and, for some $C>0,$
\begin{equation}
\label{sob2}
\begin{aligned} 
&||\bar \zeta u ||^2_{L^2(t_1,t_2;L^2(\mathbb{R}^N))}
 \leq \\
 &
  C \Big(  \underset{(t_1\leq t \leq t_2)}{\text{sup}}  || \bar \zeta u ||^2_{L^2(\mathbb{R}^N) }+ \ ||\nabla( \bar \zeta u)||^2_{L^2(t_1,t_2;L^2(\mathbb{R}^N))}\Big)
   |\{ \bar \zeta u>0 \}|^{\frac{2}{N+2}}.
\end{aligned} 
\end{equation}
\end{lemma}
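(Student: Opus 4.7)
The plan is to derive both estimates from the classical Gagliardo--Nirenberg--Sobolev embedding applied to the compactly supported function $\bar\zeta u$, combined with a Hölder--interpolation--Young chain for the parabolic inequality. I take $N \geq 3$ for the clean statement; the $N = 2$ case reduces to the same scheme after replacing $2N/(N-2)$ by any sufficiently large finite exponent.

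For the first inequality, compact support of $\bar\zeta$ in $K_R$ ensures that $\bar\zeta u(\cdot,t)$, extended by zero outside $K_R$, belongs to $W^{1,2}_0(\mathbb{R}^N)$ for a.e.\ $t \in (t_1,t_2)$. The classical Sobolev embedding $W^{1,2}_0(\mathbb{R}^N) \hookrightarrow L^{2N/(N-2)}(\mathbb{R}^N)$ then yields the claim with $C = C(N)$.

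For the second inequality, I would proceed by a three-step Hölder--interpolation--Young chain. First, Hölder's inequality in space-time with conjugate exponents $(N+2)/N$ and $(N+2)/2$, applied on the support $\{\bar\zeta u > 0\}$, gives
\begin{equation*}
\int_{t_1}^{t_2}\!\int_{\mathbb{R}^N} |\bar\zeta u|^{2}\,dx\,dt \leq \Bigl(\int_{t_1}^{t_2}\!\int_{\mathbb{R}^N} |\bar\zeta u|^{2(N+2)/N}\,dx\,dt\Bigr)^{N/(N+2)} |\{\bar\zeta u > 0\}|^{2/(N+2)}.
\end{equation*}
Second, the space-time $L^{2(N+2)/N}$ integral would be controlled via the spatial interpolation $\|f\|_{L^{2(N+2)/N}}^{2(N+2)/N} \leq \|f\|_{L^2}^{4/N}\|f\|_{L^{2N/(N-2)}}^{2}$ (valid by log-convexity with exponent $\theta = 2/(N+2)$); integrating in $t$, extracting $\|\bar\zeta u\|_{L^2}^{4/N}$ in $L^\infty_t$, and applying the first inequality yields
\begin{equation*}
\int_{t_1}^{t_2}\!\int_{\mathbb{R}^N} |\bar\zeta u|^{2(N+2)/N}\,dx\,dt \leq C \Bigl(\sup_{(t_1,t_2)}\|\bar\zeta u\|_{L^2(\mathbb{R}^N)}^2\Bigr)^{2/N}\, \int_{t_1}^{t_2}\!\int_{\mathbb{R}^N}|\nabla(\bar\zeta u)|^2\,dx\,dt.
\end{equation*}

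Substituting back and writing $A := \sup_t \|\bar\zeta u\|_{L^2}^2$ and $B := \int\!\int|\nabla(\bar\zeta u)|^2 \,dx\,dt$, the right-hand side becomes a multiple of $A^{2/(N+2)} B^{N/(N+2)} |\{\bar\zeta u > 0\}|^{2/(N+2)}$. Young's inequality with conjugate exponents $(N+2)/2$ and $(N+2)/N$ then bounds $A^{2/(N+2)} B^{N/(N+2)} \leq C(A+B)$, producing exactly the claimed estimate. The only algebraic nuisance is verifying the interpolation exponent $\theta = 2/(N+2)$; otherwise the argument is routine, consistent with this lemma's status as a well-known tool borrowed from DiBenedetto's monograph, so no serious obstacle is anticipated.
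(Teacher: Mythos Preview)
Your argument is correct and is precisely the standard derivation. Note, however, that the paper does not supply its own proof of this lemma: it simply states that ``the proof of the following lemmata can be found in the monograph \cite{DIB}'' and moves on. What you have written is essentially the proof one finds there (H\"older on the support, spatial interpolation between $L^2$ and $L^{2N/(N-2)}$, then the Sobolev embedding and Young), so there is nothing to contrast. One cosmetic remark: in the standard convention $\|f\|_p \le \|f\|_{p_0}^{1-\theta}\|f\|_{p_1}^{\theta}$ with $p_0=2$, $p_1=2N/(N-2)$, the interpolation parameter is $\theta = N/(N+2)$ rather than $2/(N+2)$; the exponent $2/(N+2)$ is $1-\theta$. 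Your displayed inequality $\|f\|_{2(N+2)/N}^{2(N+2)/N}\le \|f\|_2^{4/N}\|f\|_{2N/(N-2)}^{2}$ is correct either way, so this is only a labeling quibble.
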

For the  parabolic spaces, the following embedding inequality holds.
\begin{lemma}(Embedding  Lemma)
\label {embeded}
There exists a positive constant $\gamma_1=\gamma_1(N,p)$ such that for every function $w \in V_0^p(Q_R(T) )$
\begin{equation}\label{embed}
||w||_{L^{q,r}(Q_R(T))}  \leq \gamma_1 ||w||_{V_0^p(Q_R(T))},
\end{equation}
where $p,q,r$ satisfy the relation
\begin{equation}
\label{prq}
\frac 1{r} + \frac N{p q} = \frac N{ p^2},
\end{equation}
and in the case  $1\leq p <N$,  the admissible range is
$q\in  \Big[p, \frac { Np} { N-p} \Big], \  r\in[p,\infty].$

\end{lemma}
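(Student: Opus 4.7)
The plan is to prove this parabolic Gagliardo--Nirenberg embedding by performing a slicewise Sobolev--interpolation inequality in $x$ and then integrating in $t$, using the two pieces of the $V_0^p$-norm in a complementary way: the $L^\infty_t L^p_x$ piece absorbs a $(1-\theta)$-fraction of the lower-integrability norm, while the $L^p_t W^{1,p}_x$ piece absorbs the critical Sobolev norm raised to a $\theta$-fraction. The relation \eqref{prq} is not postulated a priori --- it will be forced by matching exponents in the final step.

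First, for a.e.\ fixed $t \in (0,T)$, Fubini gives $w(\cdot,t) \in W_0^{1,p}(K_R)$, and since $1 \leq p < N$ the classical Sobolev inequality yields
\begin{equation*}
\|w(\cdot,t)\|_{L^{p^*}(K_R)} \leq C \|\nabla w(\cdot,t)\|_{L^p(K_R)}, \qquad p^* := \frac{Np}{N-p}.
\end{equation*}
For any $q \in [p, p^*]$, the log-convexity of the $L^s$-scale (a single application of H\"older) gives
\begin{equation*}
\|w(\cdot,t)\|_{L^q(K_R)} \leq \|w(\cdot,t)\|_{L^p(K_R)}^{1-\theta} \, \|w(\cdot,t)\|_{L^{p^*}(K_R)}^{\theta},
\end{equation*}
where $\theta \in [0,1]$ is determined by $\tfrac{1}{q} = \tfrac{1-\theta}{p} + \tfrac{\theta}{p^*}$, equivalently $\theta = N\bigl(\tfrac{1}{p} - \tfrac{1}{q}\bigr)$. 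Combining the two slicewise inequalities,
\begin{equation*}
\|w(\cdot,t)\|_{L^q(K_R)} \leq C \|w(\cdot,t)\|_{L^p(K_R)}^{1-\theta} \, \|\nabla w(\cdot,t)\|_{L^p(K_R)}^{\theta}.
\end{equation*}

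I then raise this inequality to the $r$-th power and integrate in $t \in (0,T)$, pulling the $L^\infty$-in-time bound on $\|w(\cdot,t)\|_{L^p(K_R)}$ outside the integral:
\begin{equation*}
\int_0^T \|w(\cdot,t)\|_{L^q(K_R)}^{r}\, dt \;\leq\; C\, M^{\,r(1-\theta)} \int_0^T \|\nabla w(\cdot,t)\|_{L^p(K_R)}^{\,r\theta}\, dt,
\end{equation*}
where $M := \underset{(0,T)}{\text{ess\,sup}}\,\|w(\cdot,t)\|_{L^p(K_R)}$. For the remaining time integral to be controlled by $\|w\|_{V_0^p(Q_R(T))}^p$, the exponent of $\|\nabla w\|_{L^p}$ must equal $p$, which forces the algebraic constraint $r\theta = p$. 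Substituting $\theta = p/r$ into $\theta = N(\tfrac{1}{p} - \tfrac{1}{q})$ produces exactly $\tfrac{1}{r} + \tfrac{N}{pq} = \tfrac{N}{p^2}$, which is the balance law \eqref{prq}. Taking the $r$-th root and bounding both $M$ and $\|\nabla w\|_{L^p(Q_R(T))}$ by $\|w\|_{V_0^p(Q_R(T))}$ then yields \eqref{embed} with $\gamma_1 = \gamma_1(N,p)$.

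The admissible range is read off from the constraint $\theta \in [0,1]$: $\theta = 0$ corresponds to $q = p$ and $r = \infty$ (where the inequality degenerates to the trivial $\|w\|_{L^{p,\infty}} \leq M$), while $\theta = 1$ corresponds to the critical Sobolev endpoint $q = p^* = Np/(N-p)$ and $r = p$. I do not anticipate a serious conceptual obstacle here; the main care goes into the bookkeeping that makes the two exponent relations $r\theta = p$ and $\theta = N(\tfrac{1}{p} - \tfrac{1}{q})$ fit together to produce \eqref{prq}, and into justifying the slicewise Sobolev step, which is where the compact-support condition built into $V_0^p$ (namely $w(\cdot,t) \in W_0^{1,p}(K_R)$ for a.e.\ $t$) is essential.
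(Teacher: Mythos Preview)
Your argument is correct and is in fact the standard proof of this parabolic embedding: slicewise Gagliardo--Nirenberg interpolation between $L^p$ and $L^{p^*}$, followed by integration in time with the exponent match $r\theta=p$ forcing \eqref{prq}. The paper itself does not prove this lemma but simply refers to DiBenedetto's monograph \cite{DIB}, where the same interpolation-and-integrate argument is carried out; so there is nothing to compare beyond noting that your write-up reproduces that classical proof.
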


\

\begin{lemma}(Fast geometric convergence Lemma).\\
\label{fastconv}
Let  $(X_i) \ and \ (Y_i), \ i=0,1,...$ be two sequences of positive numbers satisfying the recursive inequalities
\begin{equation*}
\label{fast}
X_{i+1} \leq c \ {\rm b}^i (X_i^{ 1+\hat\alpha}+ X_i^{\hat \alpha}   \ Y_i^{ 1+\kappa});
 \quad \quad
Y_{i+1} \leq  c\  {\rm b}^i (X_i+   \ Y_i^{ 1+\kappa}) \\ 
\end{equation*} 
with $c,{\rm b}>1$  and $ \kappa, \hat \alpha>0$  given numbers. If
\begin{equation*}
X_0+   \ Y_0^{ 1+\kappa} \leq (2c)^{- \frac{1+\kappa} {\sigma } }  {\rm b}^{- \frac{1+\kappa} {\sigma^2 } }, \quad \sigma = \min(\kappa, \hat \alpha),
\end{equation*} 
then $(X_i) \ and \ (Y_i) \rightarrow 0$ as $i \to \infty.  $
\end{lemma}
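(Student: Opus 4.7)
The plan is to reduce the coupled two-sequence recursion to a single scalar recursion on the combined quantity
\[
Z_i := X_i + Y_i^{1+\kappa},
\]
which is precisely what appears on the right-hand sides of both inequalities. Factoring the first recurrence gives $X_{i+1} \leq c\, b^i X_i^{\hat\alpha} Z_i \leq c\, b^i Z_i^{1+\hat\alpha}$, using $X_i \leq Z_i$. Raising the second recurrence to the power $1+\kappa$ yields $Y_{i+1}^{1+\kappa} \leq (c\, b^i)^{1+\kappa} Z_i^{1+\kappa}$. Adding the two estimates and exploiting $Z_i \leq 1$ (which, by the smallness assumption and $c,b>1$, will be preserved along the iteration), I can bound $Z_i^{1+\hat\alpha}$ and $Z_i^{1+\kappa}$ by the common power $Z_i^{1+\sigma}$, where $\sigma = \min(\hat\alpha, \kappa)$. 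Collecting constants, this produces a scalar recursion of the canonical fast-geometric-convergence type
\[
Z_{i+1} \leq \tilde c\, \tilde b^{\,i} Z_i^{1+\sigma}, \qquad \tilde c := 2c^{1+\kappa},\ \tilde b := b^{1+\kappa}.
\]

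Next I would close the proof by the classical scalar induction: setting $q := \tilde b^{-1/\sigma} \in (0,1)$, the claim is that $Z_i \leq q^i Z_0$ for every $i$. After substituting $q$ into the one-step estimate and simplifying, the inductive step reduces to the single requirement $Z_0 \leq \tilde c^{-1/\sigma}\, \tilde b^{-1/\sigma^2}$. A direct comparison using $1+\kappa \geq 1$ (equivalently, $2^{1+\kappa} \geq 2$) shows that the stated hypothesis
\[
X_0 + Y_0^{1+\kappa} \leq (2c)^{-(1+\kappa)/\sigma}\, b^{-(1+\kappa)/\sigma^2}
\]
is at least as strong as this requirement, so the induction closes. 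Since $Z_i \to 0$ geometrically, both $X_i \leq Z_i \to 0$ and $Y_i \leq Z_i^{1/(1+\kappa)} \to 0$, which is the asserted conclusion.

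The main obstacle, and really the only delicate point, is the exponent bookkeeping: the hypothesis carries a factor $1+\kappa$ in both the $2c$-exponent and the $b$-exponent, and one must verify that this is exactly what is needed to absorb the inflation of the coefficient $c\, b^i$ to $(c\, b^i)^{1+\kappa}$ when the $Y$-recurrence is raised to the power $1+\kappa$. There is no analytic subtlety beyond identifying the combining variable $Z_i$; the conclusion then follows from a standard scalar geometric-iteration lemma (see, e.g., Lemma~5.6 of DiBenedetto's monograph \cite{DIB}).
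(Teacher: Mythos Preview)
Your argument is correct. The paper does not actually supply a proof of this lemma: it is one of the ``following lemmata'' for which the authors simply refer the reader to DiBenedetto's monograph \cite{DIB}. Your reduction via $Z_i = X_i + Y_i^{1+\kappa}$ to the scalar recursion $Z_{i+1}\le \tilde c\,\tilde b^{\,i} Z_i^{1+\sigma}$, followed by the standard geometric induction with ratio $q=\tilde b^{-1/\sigma}$, is exactly the route taken in that reference (Chapter~I, Lemma~4.2), and your check that the stated hypothesis $(2c)^{-(1+\kappa)/\sigma} b^{-(1+\kappa)/\sigma^2}\le \tilde c^{-1/\sigma}\tilde b^{-1/\sigma^2}$ is sound.
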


{\bf Steklov averages}\\
Since the solutions of  \eqref{first equ}  possess a modest degree of regularity  in the time variable, we utilize the Steklov average $u_h$ of the weak solution $u$, for $h>0$:
 \begin{equation*}
u_h(\cdot, t)= \frac 1h \int_t^{t+h} u(\cdot,\tau) d\tau.
\end{equation*}
For a complete statement of Steklov averages and for their convergence to  $ u$, as $h\to 0$, we refer the reader to  \cite{DIB} and \cite{DBGVb}.
Then we point out an alternative formulation of weak solution to \eqref{first equ}:
fix $t\in(0,T)$, let $h>0$  with $ 0<t<t+h<T$ and replace in \eqref{weaku}
$t_1$ with $t$ and $t_2$ with $t+h$; choosing  a test function $\psi$ independent on  $\tau \in (t,t+h)$, dividing by $h$, using the Steklov averages   we get
\begin{equation}
 \label{weak steklov}
 \begin{aligned} 
& \int_{{\cal{K}}\times t }[ (u_h)_t \psi    + ( (\nabla u^m)_h, \nabla \psi ) - \chi (u^{\mathfrak q-1} 
 \nabla v)_h  \nabla \psi ]
     dx  d \tau \\
     &=     \int_{{\cal{K}} \times t} B(x,t,u, \nabla u) 
 \psi \ dx d \tau,\\
\end{aligned}
\end{equation}
for all locally bounded non negative testing function 
$\psi \in W^{1,2}_{loc}(0,T;L^2({\cal{K}}))\cap  L^{p}_{loc}(0,T; W^{1,p}_{0}
({\cal{K}})).$\\
Integrating over $[t_1,t_2] $ and letting $h\to 0$  \eqref{weak steklov} gives \eqref{weaku}.\\

An essential tool for the regularity are the energy estimates. Define $ (k- u^m)_+$ ($ (k- u^m)_-$, resp.) as  $ k- u^m$ if $k>u^m$ ($ u^m- k$ if $k<u^m$, resp.) and 0 otherwise. Here we state these estimates only for $(k-u^m)_+,$ (omitting the sign +), being the other case specular. Let  \,$q,\, r > 1,\,\,0<\kappa < \frac{2}{N}$\,\,and introduce \,\, $\tilde q, \tilde r$\,\, related to \,\,$q, r, \,\,\kappa$\,\, by the formulas: 
\begin{equation}\label{eq.rtil} 
 1-\frac 1{q}   = \frac{2(1+\kappa)} { \tilde q} , \ \ 1-\frac 1{r}   = \frac{2(1+\kappa) } { \tilde r}.
 \end{equation}

\begin{lemma}(Local Energy estimates) 
\label{EnEst}
Let $t_1<t_2, \ {\mathfrak q}>2, \ m>1$ and let  $(u,v)$ be a locally bounded  weak solution  of problem \eqref{first equ}.\\
There exist  constants  $\gamma^*>0$ and ${\bf C}>0$  depending only upon the data and $||u||_{L^{\infty}(R^N\times (\varepsilon,T))},$  such that for a cut-off function $\eta$ compactly supported in $K_R$ and for every  level $k$,
\begin{equation}\label{EE}
\begin{aligned}
&\int_{K_R\times t_2  } \eta^2 \Big[ \int_0^{ (k-u^m)} \Phi(\xi) \ d\xi  \Big] dx + \gamma^*\int_{t_1}^{t_2} \int_{K_R}
|\nabla (\eta (k-u^m)|^2dx dt \\
& \leq {\bf C} m  \! \Big( \int_{t_1}^{t_2}\!\! \int_{K_R} (k-u^m)^2 |\nabla \eta|^2 dx dt 
+ \! \int_{t_1}^{t_2}\!\! \int_{K_R} \!\! \Big[  \int_0^{ (k-u^m)}\!\!\! \Phi(\xi ) d\xi \Big] |\eta \eta_t| dx dt\\
&+ \int_{K_R\times t_1 } \eta^2 \Big[  \int_0^{ (k-u^m)} \Phi(\xi) d\xi \Big] dx +    \Big[ \int_{t_1}^{t_2}( |A_{k,R}|^{\frac {\tilde r } {\tilde q }}dt \Big]^{\frac{2 (1+k) } {\tilde r}}\Big)
\end{aligned}
\end{equation} 
with  $\Phi(\xi )=: (k- \xi)^{\frac 1m -1 } \xi  $   and  $ A_{k,R}(t)= \{x \in K_R : (k-u^m)>0 \}.$
\end{lemma}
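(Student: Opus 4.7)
The plan is to derive \eqref{EE} by inserting a truncation-type test function into the Steklov-averaged weak formulation \eqref{weak steklov} and then passing to the limit $h\to 0$. Specifically, I would test with $\psi=\eta^2(k-(u^m)_h)_+$, multiply the resulting identity by $m$, and use the standard convergence of Steklov averages (as quoted from \cite{DIB,DBGVb}) to recover the original weak formulation in the limit. This decomposes the weak form into four groups of contributions: the time derivative, the diffusion, the chemotactic drift, and the source.

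The time term is the cornerstone. A direct computation with $\Phi(\xi)=(k-\xi)^{1/m-1}\xi$, via the substitution $\xi=k-s^m$, yields the pointwise identity
\[
\frac{d}{dt}\int_0^{(k-u^m)_+}\Phi(\xi)\,d\xi \;=\; -m\,(k-u^m)_+\,u_t .
\]
Hence, after integration by parts in $t$ over $[t_1,t_2]$, the contribution $\int\!\int u_t(k-u^m)_+\eta^2$ generates the terminal integral $\int_{K_R\times t_2}\eta^2[\int_0^{(k-u^m)}\Phi\,d\xi]\,dx$ on the LHS together with both the initial-slice integral and the $\int\!\int[\int_0^{(k-u^m)}\Phi\,d\xi]\,|\eta\eta_t|$ term on the RHS, matching the first line of \eqref{EE} on each side.

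For the diffusion, expand $\nabla\psi = 2\eta\,\nabla\eta\,(k-u^m)_+ - \eta^2\,\nabla u^m$. Combined with the algebraic identity $|\nabla(\eta(k-u^m))|^2 = \eta^2|\nabla u^m|^2 + 2\eta(k-u^m)\,\nabla u^m\cdot\nabla\eta + (k-u^m)^2|\nabla\eta|^2$ and Young's inequality applied to the cross piece, this produces the gradient term $\gamma^*\!\int|\nabla(\eta(k-u^m))|^2$ on the LHS together with $\mathbf{C}\!\int(k-u^m)^2|\nabla\eta|^2$ on the RHS. The chemotactic drift $\chi\!\int u^{\mathfrak q-1}\nabla v\cdot\nabla\psi$ is handled using the $L^\infty$-bound on $u$ and the a-priori $L^{q,r}$-regularity of $\nabla v$ coming from the second equation of \eqref{first equ} via the classical parabolic/elliptic theory invoked in the introduction; Hölder and Young then split this contribution into an absorbable $|\nabla u^m|^2$ part and a term that, through the embedding \eqref{embed} with exponents dictated by \eqref{eq.rtil}, is packaged into the level-set factor $|A_{k,R}|$.

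Finally, the source term $\int B(k-u^m)_+\eta^2$ is decomposed via \eqref{source} into a quadratic-gradient piece $C\!\int|\nabla u^m|^2(k-u^m)_+\eta^2$ and a $\phi$-piece. The former is controlled by $C\|u^m\|_{L^\infty}\!\int\eta^2|\nabla u^m|^2$ (using $(k-u^m)_+\leq\|u^m\|_{L^\infty}$) and absorbed into the LHS gradient term, while the latter is handled by Hölder in the parabolic space $L^{q,r}$ restricted to $A_{k,R}=\{(k-u^m)_+>0\}$, producing the $[\int|A_{k,R}|^{\tilde r/\tilde q}dt]^{2(1+\kappa)/\tilde r}$ contribution. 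The main obstacle is the joint absorption of the several $|\nabla u^m|^2$ pieces that arise simultaneously from the source, the drift, and the diffusion cross-term: the Young parameters must be calibrated so that the coefficient of $\int|\nabla(\eta(k-u^m))|^2$ on the LHS stays strictly positive, which is precisely why both $\gamma^*$ and $\mathbf{C}$ in the statement are allowed to depend on $\|u\|_{L^\infty(\mathbb{R}^N\times(\varepsilon,T))}$ and on the structural data.
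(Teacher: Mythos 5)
Your proposal is correct and follows essentially the same route as the paper's Appendix A: test the Steklov-averaged formulation with the truncation $\eta^2(k-u^m)_+$, convert the time term via the primitive of $\Phi$, expand the diffusion term, split the drift and the Hadamard source by Young's inequality, bound $\nabla v$ through the a priori heat estimates, and package the remaining pieces by H\"older into the level-set factor $|A_{k,R}|$. Even your closing caveat about calibrating the Young parameters so the gradient term survives absorption mirrors the paper's own ``choosing $C$ small enough'' step.
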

The proof of this estimate follows an argument similar to the one by 
DiBenedetto and  for the reader's convenience we  give a proof in the Appendix \ref{cap:Proof of Lemma}.

Let us continue  this section with a priori estimates on the 
$L^p-L^{p'}$ norm  of the solutions of evolution equations with
 \begin{equation}
\label{pp'}   
 1\leq p' \leq p\leq \infty, \qquad \qquad  \frac 1{p'}- \frac 1{p} <  \frac 1{N}.
\end{equation}

 Consider   the following Cauchy problem :
 \begin{equation}
 \label{heat}
 \left\{ \begin{array}{l}
\begin{aligned}
& v_t= \Delta v - av +w , \quad (x,t)\in \mathbb{R}^N\times (t>0), \\
&v(x,0)=v_0(x), \quad  \quad x\in \mathbb{R}^N,
\end{aligned} 
\end{array} \right. 
 \end{equation}
then by classical $L^p$ maximal regularity properties (see for instance \cite{HP}, see also  \cite{SK}) we have:
\begin{lemma}(Heat)
\label{Heat}
Let $v$ be the solution to \eqref{heat}. If $v_0 \in W^{1,p}(\mathbb{R}^N) $ and $ w\in L^{\infty}(0,\infty; L^{p'}(\mathbb{R}^N)),$ with $p,p'$ in \eqref{pp'}, then for $t\in [0,\infty)$, there exist  positive constants $C_0, \bar C_0$, depending on $p, p'$ and $N$  such that
  \begin{equation}
 \label{grad v parab}
 \left\{ \begin{array}{l}
\begin{aligned} 
||v(t)||_{L^p(\mathbb{R}^N)}&\leq   \ ||v_0||_{L^p(\mathbb{R}^N)} + C_0  ||w(\tau)||_{L^\infty(0,T; L^{p'}(\mathbb{R}^N))},\\
||\nabla v(t)||_{L^p(\mathbb{R}^N) } & \leq  ||\nabla v_0||_{L^{p}(\mathbb{R}^N) }+  \bar C_0  ||w(\tau)||_{L^\infty(0,T; L^{p'}(\mathbb{R}^N))}.
\end{aligned}
\end{array} \right.
\end{equation}
\end{lemma}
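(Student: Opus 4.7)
The plan is to represent $v$ via Duhamel's formula for the damped heat semigroup and then invoke the standard $L^{p'}\!\to\! L^p$ smoothing estimates of the Gaussian kernel. Writing $G_t(x)=(4\pi t)^{-N/2}e^{-|x|^2/(4t)}$ for the heat kernel on $\mathbb{R}^N$, the unique mild solution of \eqref{heat} is
\[
v(\cdot,t)=e^{-at}\,G_t\!*v_0+\int_0^t e^{-a(t-s)}\,G_{t-s}\!*w(\cdot,s)\,ds.
\]
The first inequality in \eqref{grad v parab} then follows by taking $L^p$-norms of both sides, using the contraction $\|G_t\!*v_0\|_{L^p}\le\|v_0\|_{L^p}$ together with Young's convolution inequality and the well-known bound $\|G_\tau\!*f\|_{L^p(\mathbb{R}^N)}\le(4\pi\tau)^{-\frac{N}{2}(\frac{1}{p'}-\frac{1}{p})}\|f\|_{L^{p'}(\mathbb{R}^N)}$. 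This produces
\[
\|v(t)\|_{L^p}\le\|v_0\|_{L^p}+\|w\|_{L^\infty(0,T;L^{p'})}\int_0^t e^{-a\tau}(4\pi\tau)^{-\frac{N}{2}(\frac{1}{p'}-\frac{1}{p})}d\tau,
\]
and since \eqref{pp'} forces the singularity exponent at $\tau=0$ to be strictly below $\tfrac12<1$ while the factor $e^{-a\tau}$ (with $a>0$) absorbs the behavior at infinity, the integral is bounded uniformly in $t\in[0,\infty)$; this defines $C_0=C_0(N,p,p',a)$.

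For the gradient inequality I would differentiate the Duhamel formula, placing the gradient on the initial datum in the first term (so that $\|G_t\!*\nabla v_0\|_{L^p}\le\|\nabla v_0\|_{L^p}$ by the same contraction) and on the kernel in the Duhamel integral. The companion smoothing estimate $\|\nabla G_\tau\!*f\|_{L^p}\le C\,\tau^{-\frac12-\frac{N}{2}(\frac{1}{p'}-\frac{1}{p})}\|f\|_{L^{p'}}$ then gives
\[
\|\nabla v(t)\|_{L^p}\le\|\nabla v_0\|_{L^p}+C\|w\|_{L^\infty(0,T;L^{p'})}\int_0^t e^{-a\tau}\tau^{-\frac12-\frac{N}{2}(\frac{1}{p'}-\frac{1}{p})}d\tau.
\]

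The only delicate point — and the reason assumption \eqref{pp'} is imposed — is the integrability at $\tau=0$ in this last integral: the exponent $\frac12+\frac{N}{2}(\frac{1}{p'}-\frac{1}{p})$ is strictly less than $1$ precisely when $\frac{1}{p'}-\frac{1}{p}<\frac{1}{N}$, i.e.\ exactly condition \eqref{pp'}. Under this hypothesis the $\tau$-integral converges and is controlled uniformly in $t$ (once more thanks to the exponential damping at infinity), producing $\bar C_0=\bar C_0(N,p,p',a)$. No Schauder or De Giorgi machinery is needed here; alternatively one could invoke the abstract $L^p$-maximal regularity for the analytic heat semigroup cited as Hille--Phillips \cite{HP}, but the explicit convolution argument above is shorter and completely self-contained.
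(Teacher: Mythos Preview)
Your argument is correct. The paper itself does not prove this lemma at all: it is stated as a direct consequence of ``classical $L^p$ maximal regularity properties'' and the reader is simply referred to \cite{HP} and \cite{SK}. So there is no ``paper's own proof'' to compare against; you have supplied one where the authors chose to quote the literature.

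Your Duhamel-plus-smoothing approach is the standard way to obtain exactly these bounds, and you have correctly identified the role of hypothesis \eqref{pp'}: it is precisely what makes the exponent $\tfrac12+\tfrac{N}{2}\bigl(\tfrac{1}{p'}-\tfrac{1}{p}\bigr)$ strictly less than $1$, hence the time integral in the gradient estimate is finite near $\tau=0$, while the damping $e^{-a\tau}$ controls the tail. Two very minor remarks. First, your constants $C_0,\bar C_0$ inevitably depend on $a$ as well (through the integrals $\int_0^\infty e^{-a\tau}\tau^{-\beta}d\tau$); the paper's statement that they depend only on $p,p',N$ is a slight imprecision, not a defect in your proof. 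Second, the reference \cite{HP} in this paper is Hieber--Pr\"uss, not Hille--Phillips.
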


We conclude this section recalling the preliminaries necessary for the regularity machinery. \\
Consider  $0<R<1$ sufficiently small. Set  $Q(2R, R^{2-\varepsilon})=: K_{2R} \times [-R^{2-\varepsilon}, 0],$ where $\epsilon$ is a positive number to be chosen later. We set
\begin{equation}\label{eq.1}
 \begin{aligned} 
&\mu_+ \! = \! \! \underset{Q (2R, R^{2-\varepsilon})} {\text{ ess\,sup}}u^m,\,\,\mu_- \! = \! \!  \underset{Q (2R, R^{2-\varepsilon})}{\text{ ess\,inf}}u^m, \quad
\  \omega= \! \!  \underset{Q (2R, R^{2-\varepsilon})}{\text{ ess\,osc}}\! u^m \! =\! \mu_+ \! - \mu_-.\\
\end{aligned}
\end{equation}

 Let
$s_0$  be  the smallest integer such that
\begin{equation}
\label{s0}
\theta_0= \frac{\omega}{2^{s_0}}<1, \  \ \alpha = 1-\frac{1}{m}, \ \ a_0 =  \frac{\omega}{A}
\end{equation}
with $ A>2^{s_0} , A$ a positive constant to be determined later. \ 
 We introduce sub cylinders  with center at $(0,\bar t): Q^{\bar t}(R, \theta_0^{-\alpha} R^2) = K_R\times [\bar t- \hat \theta, \bar t  ]  =:Q^{\bar t}_R(\hat \theta)$, with $\hat \theta= \theta_0^{-\alpha} R^2$     and 
\begin{equation}
\label{Qt} 
 Q(R, a_0^{-\alpha}R^2)=:Q_ R(\hat a), \quad { \rm with} \quad  Q^{\bar t}_R(\hat \theta)  \! \subset \!
Q_R(\hat a). 
\end{equation}

WLOG, assume that
$
(\frac{\omega}{A})^\alpha> R^\varepsilon.
$ 
 If for any\, $ R <1  $\, this does not hold,  we have \,\,$\omega \leq A R^{\frac{\varepsilon}{\alpha}} $ and there is nothing to prove since the oscillation is then comparable  to the radius. \ Next, if $\omega > A R^{\frac{\varepsilon}{\alpha}},$
  we prove that the oscillation of $u^m$ is reduced by a fixed factor in the set $Q_R(\hat \theta),$ by analyzing two complementary alternatives. 
In the first alternative, let us   assume that there exists  a subcylinder $ Q^{\bar t}_R(\hat \theta) $ where $u^m$ is away from its infimum. 
Under such hypothesis,  we are able to prove  that the oscillation decreases  
of a fixed factor  in that sub cylinder. Then, by using the so-called expansion of positivity in time, we are able to transport that information to the top of the original cylinder, in the "right "sub cylinder.
In the second alternative  we 
examine the case when   in all the cylinders $ Q^{\bar t}_R(\hat \theta) $,   $u^m$ is essentially away from its supremum and we prove that the oscillation decreases by a fixed factor also in this case  directly in the whole cylinder.\\ 
 
\section{ $1^{st} $ alternative in the parabolic-parabolic case}
\label{first pp}

In this section we examine the first alternative  with $\tilde \tau=1$ in \eqref{first equ}: i.e.

there exists a cylinder $ Q^{\bar t}_R(\hat \theta) $ where
\begin{equation}\label{first}
\frac { \big| \{(x,t)\in Q^{\bar t}_R(\hat \theta): 
u^m < \mu_- + \frac \omega{2^{s_0+1} }\} \big|} {|Q^{\bar t}_R(\hat \theta)| } \leq \nu
\end{equation}  
with the positive constant $\nu$ to be defined later.\\

To work in this context, we 
need the so-called {\it critical mass} and  {\it expansion in time}  lemmata. For more details  about these definitions, see, for instance, the review paper \cite{DMoV}.

\begin{lemma}(Critical mass lemma)
\label{teo4}
Let us consider the cylinder $Q^{\bar t}_R(\hat \theta)$  defined in \eqref{Qt} and let $s_0$ be defined in  \eqref{s0}. Then  there exists a number $\nu\in (0,1)$ such that, if
\begin{equation}\label{eq.5}
 | \{ (x,t) \in Q^{\bar t}_R(\hat \theta) : \ u^m < \mu_- + \frac \omega{2^{s_0} }   \}| \leq \nu \ | Q^{\bar t}_R(\hat \theta)  |,
 \end{equation} 
 then
 \begin{equation}\label{eq.1}
u^m > \mu_- + \frac \omega{2^{s_0+1} }, \quad  a.e.  \  \ in \  Q^{\bar t}_{\frac R2}(\hat \theta).
\end{equation} 
\end{lemma}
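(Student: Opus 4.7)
The statement is a De Giorgi--type critical-mass lemma for a porous-medium equation with chemotactic drift and Hadamard-growth source, and I would derive it by combining the energy estimate of Lemma~\ref{EnEst}, the parabolic embedding of Lemma~\ref{embeded} and the fast geometric convergence of Lemma~\ref{fastconv}. The plan is to run a De Giorgi iteration on a nested family of cylinders $Q_n$ shrinking from $Q^{\bar t}_R(\hat\theta)$ down to $Q^{\bar t}_{R/2}(\hat\theta)$, together with a decreasing sequence of levels $k_n\downarrow \mu_-+\omega/2^{s_0+1}$, and to show that the normalized measure $Y_n=|A_n|/|Q_n|$ of the bad set $A_n=\{(x,t)\in Q_n:u^m<k_n\}$ satisfies a superlinear recursion $Y_{n+1}\le C\,b^{n}\,Y_n^{1+\sigma}$ that is activated by the smallness of $Y_0$ provided by \eqref{eq.5}.

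Concretely, take $R_n=\tfrac{R}{2}+\tfrac{R}{2^{n+1}}$, $k_n=\mu_-+\tfrac{\omega}{2^{s_0+1}}+\tfrac{\omega}{2^{s_0+1+n}}$ (so $k_0=\mu_-+\omega/2^{s_0}$ and $k_\infty=\mu_-+\omega/2^{s_0+1}$), cylinders $Q_n=K_{R_n}\times[\bar t-\hat\theta,\bar t]$, and a product cut-off $\eta_n(x)\zeta_n(t)$ with $\eta_n\equiv 1$ on $K_{R_{n+1}}$, $\zeta_n(\bar t-\hat\theta)=0$ and $\zeta_n\equiv 1$ on the upper half of the time interval, satisfying $|\nabla\eta_n|\le 2^{n+2}/R$ and $|\zeta_{n,t}|\le 2^{n+2}/\hat\theta$. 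Applying Lemma~\ref{EnEst} with test level $k_n$ on $Q_n$, and then the embedding of Lemma~\ref{embeded} with $p=2$ and $q=r=2(N+2)/N$, I would use the pointwise lower bound $(k_n-u^m)_+\ge \omega/2^{s_0+2+n}$ on $A_{n+1}$ to get
\begin{equation*}
\Big(\tfrac{\omega}{2^{s_0+2+n}}\Big)^{q}|A_{n+1}|\le\iint_{Q_{n+1}}(k_n-u^m)_+^{q}\,dx\,dt\le C\,b^{n}\,|A_n|^{1+\sigma_0},
\end{equation*}
where the constant $C$ depends on the data and on $\|u\|_\infty$ (through $\gamma^*$ and $\mathbf{C}$ of Lemma~\ref{EnEst}), on the $L^p$ norms of $\nabla v$ controlled by Lemma~\ref{Heat} and on $\|\phi\|_{L^{q,r}}$, while $\sigma_0>0$ is the extra integrability produced by the embedding. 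The quadratic-gradient piece $C|\nabla u^m|^2$ of $B$ is absorbed into the coercive term $\gamma^*\iint|\nabla(\eta_n\zeta_n(k_n-u^m)_+)|^2$ of \eqref{EE}, while $\phi$ and the drift contribute to the last term of \eqref{EE}. Normalizing by $|Q_n|$ and exploiting the intrinsic scaling $\hat\theta=\theta_0^{-\alpha}R^2$ to absorb the $\omega^{-q}$ factor gives the recursion $Y_{n+1}\le C b^n Y_n^{1+\sigma}$; Lemma~\ref{fastconv} then pins down a threshold $\nu$ so that $Y_0\le\nu$ forces $Y_n\to 0$, i.e.\ $u^m\ge k_\infty=\mu_-+\omega/2^{s_0+1}$ a.e.\ in $Q^{\bar t}_{R/2}(\hat\theta)$.

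Two technical points deserve emphasis. First, the potential $\int_0^{k-u^m}\Phi(\xi)\,d\xi$ in \eqref{EE} must be shown to be comparable to $(k-u^m)^2$ on the range of levels in play, so that the energy estimate really yields control of $\|\eta_n\zeta_n(k_n-u^m)_+\|_{V^2_0}$; via the substitution $\eta=k-\xi$ this reduces to bounding $\eta^{1/m-1}$ between two positive constants on $[u^m,k_n]$, and it is precisely the choice $\theta_0=\omega/2^{s_0}<1$ together with the intrinsic time-scaling $\hat\theta=\theta_0^{-\alpha}R^2$ that rebalances the degeneracy at these levels. Second, and this is where I expect the real obstacle, the chemotactic drift $-\chi u^{\mathfrak q-1}\nabla v$ has to be controlled so that its contribution in the right-hand side of \eqref{EE} carries a power of $|A_n|$ strictly greater than one: this uses H\"older's inequality together with the maximal-regularity estimate of Lemma~\ref{Heat} for $\nabla v$ in a sufficiently high $L^p$, and is the step in which the structural hypothesis $\mathfrak q\ge\max\{(m+1)/2,\,2\}$ is actually used to guarantee that the exponent $\sigma>0$ in the final recursion is genuinely positive.
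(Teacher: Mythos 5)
Your overall strategy coincides with the paper's: the same nested radii $R_i=\frac R2+\frac{R}{2^{i+1}}$, the same levels $k_i=\mu_-+\frac{\omega}{2^{s_0+1}}+\frac{\omega}{2^{s_0+1+i}}$, the energy estimate of Lemma~\ref{EnEst}, the two-sided comparison of $\int_0^{k-u^m}\Phi(\xi)\,d\xi$ with $(k-u^m)^2/(\theta_0+\mu_-)^{\alpha}$ (which the paper proves as \eqref{ab2}), the rescaling in time to absorb the $\theta_0^{-\alpha}$ factors using $(\omega/A)^\alpha>R^\varepsilon$, and Lemma~\ref{fastconv} to conclude. The gap is in the shape of the recursion you propose to close. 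You reduce everything to a single sequence $Y_n=|A_n|/|Q_n|$ satisfying $Y_{n+1}\le Cb^nY_n^{1+\sigma}$, and you assert that the drift and the source ``carry a power of $|A_n|$ strictly greater than one.'' But the right-hand side of \eqref{EE} does not contain a power of the space-time measure $|A_{k,R}|$: the H\"older estimates on $u^{\mathfrak q-1}\nabla v$ (via Lemma~\ref{Heat}) and on $\phi\in L^{q,r}$ produce the mixed-norm quantity $\bigl(\int_{t_1}^{t_2}|A_{k,R}(t)|^{\tilde r/\tilde q}\,dt\bigr)^{2(1+\kappa)/\tilde r}$, which for general admissible $q,r$ in \eqref{eq.rtil} cannot be dominated by a superlinear power of $|A_n|$ (one would need $2(1+\kappa)/\tilde r>1$, which is not guaranteed). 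This is precisely why Lemma~\ref{fastconv} is stated for \emph{two} coupled sequences: the paper introduces $X_i=|Z_i|/|Q_i|$ together with $Y_i=\bigl(\int|Z_i(z)|^{\tilde r/\tilde q}dz\bigr)^{2/\tilde r}/|K_{R_i}|$, derives $X_{i+1}\le c\,16^i(X_i^{1+\hat\alpha}+X_i^{\hat\alpha}Y_i^{1+\kappa})$ from the Sobolev Lemma~\ref{sobolev} and $Y_{i+1}\le c\,16^i(X_i+Y_i^{1+\kappa})$ from the Embedding Lemma~\ref{embeded}, and only then invokes fast geometric convergence. Your single-sequence iteration, as written, does not close without this second quantity (or without restricting to $\tilde q=\tilde r$).

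A secondary inaccuracy: the hypothesis $\mathfrak q\ge\max\{\frac{m+1}{2},2\}$ is not what makes the exponent in the recursion positive. In this lemma it only serves to bound $u^{2(\mathfrak q-1)}$ by $\mu_+^{2(\mathfrak q-1)/m}$ in the drift term (the splitting $\mathfrak q-1=\frac{m-1}{2}+\mathfrak q-\frac{m+1}{2}$ is used later, in the logarithmic estimates of the second alternative); the superlinearity of the iteration comes from the Sobolev/embedding exponents $\hat\alpha=\frac{2}{N+2}$ and $\kappa$, not from $\mathfrak q$.
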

\begin{lemma}(Expansion in time lemma)
 \label{for every nu}
For every $\nu_1 \in (0,1)$, there exists  a positive integer $s_1$, depending only upon  the data and independent of $\omega, R$ such that for all $t\in  (\bar t- \hat \theta, 0)$
\begin{equation}
\label{for every}
\Big| \left\lbrace  x\in\,K_{ \frac R4}: u^m(x,t)< \mu_- + \frac{\omega}{2^{s_1}}\right\rbrace \Big | \leq  \, \nu_1  \big|K_{ \frac R4}\big|,
\end{equation} 
 \end{lemma}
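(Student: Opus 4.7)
My plan is to use a logarithmic energy estimate in the spirit of De~Giorgi–DiBenedetto, adapted to absorb the chemotactic drift and the Hadamard source, in order to propagate forward in time the pointwise lower bound supplied by Lemma \ref{teo4}. From Lemma \ref{teo4} we already have $u^m \geq \mu_- + \omega/2^{s_0+1}$ a.e.\ in $Q^{\bar t}_{R/2}(\hat\theta)$; in particular, at the top time slice $t=\bar t$ this bound holds throughout $K_{R/2}$, and for $t \in (\bar t - \hat\theta,\bar t]$ the claim is immediate with even $s_1 = s_0 + 1$. The real work is the range $t \in [\bar t,0]$, to which the following argument is devoted.

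Set $w := u^m - \mu_-$, $H := \omega/2^{s_0+1}$, and, for a large integer $s_1 > s_0+1$ to be chosen later, $c := \omega/2^{s_1}$. Introduce the logarithmic truncation
\[
\Psi(w) \;:=\; \ln^+\!\frac{H+c}{w+c},
\]
which is nonnegative, non-increasing in $w$, supported on $\{w<H\}$, bounded by $(s_1-s_0)\ln 2$, and identically zero on $K_{R/2}$ at $t=\bar t$. Taking a spatial cutoff $\eta$ with $\eta \equiv 1$ on $K_{R/4}$, $\mathrm{supp}\,\eta \subset K_{R/2}$, $|\nabla\eta|\leq 4/R$, I would test the Steklov formulation \eqref{weak steklov} against $\psi := \Psi(w)\,|\Psi'(w)|\,\eta^2$ and let $h\to 0$. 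After integrating from $\bar t$ to $t \in (\bar t, 0]$, the parabolic term produces $\int_{K_{R/4}} \Psi^2(w(\cdot,t))\,dx$ on the left (the $t=\bar t$ contribution vanishing) together with a nonnegative dissipation $\int_{\bar t}^{t}\!\int (\Psi'(w))^2 |\nabla w|^2 \eta^2$.

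On the right appear three terms that must be controlled. The cutoff contribution is bounded by $C(s_1-s_0)R^{-2}|Q^{\bar t}_R(\hat\theta)|$, which after using $\hat\theta = \theta_0^{-\alpha}R^2$ and the assumption $\omega > A R^{\varepsilon/\alpha}$ reduces to $C(s_1-s_0)|K_{R/4}|$. The chemotactic term $u^{\mathfrak q-1} \nabla v \cdot \nabla\psi$ is handled via the $L^\infty$ bound on $u$, the estimate $\|\nabla v\|_\infty \leq \|\nabla v_0\|_\infty + \bar C_0 \|u\|_\infty$ coming from Lemma \ref{Heat}, and Young's inequality, which absorbs a small fraction of the dissipation. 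Finally, the source term, under the Hadamard bound \eqref{source}, generates $C\int |\nabla w|^2 \Psi|\Psi'|\eta^2 + \int\phi\,\Psi|\Psi'|\eta^2$; the former is absorbable into the dissipation because $\Psi\leq (s_1-s_0)\ln 2$ is outweighed by $|\Psi'|=1/(w+c)$ once $s_1$ is fixed independently of $\omega,R$, and the latter is controlled by $\|\phi\|_{L^{q,r}}$ together with Lemma \ref{embeded}.

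Combining these inputs, and using $\Psi(w) \geq (s_1-s_0-1)\ln 2$ pointwise on the set $\{w < c\} = \{u^m < \mu_- + \omega/2^{s_1}\}$, I would conclude
\[
[(s_1-s_0-1)\ln 2]^2\, \bigl|\{x\in K_{R/4}:\, u^m(x,t) < \mu_- + \tfrac{\omega}{2^{s_1}}\}\bigr| \;\leq\; \gamma\,(s_1-s_0)\,|K_{R/4}|,
\]
uniformly for every $t \in (\bar t,0]$. Picking $s_1$ so large that $\gamma/(s_1-s_0-1) \leq \nu_1$, with $s_1$ depending only on the data and on $\nu_1$, gives the desired estimate. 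The central obstacle is the absorption in step (iii): the dissipation and the Hadamard quadratic-gradient contribution are of the same structural order, so one must leverage the quantitative bound on $\Psi$ together with the lower bound on $|\Psi'|$ to close the inequality. The whole mechanism works uniformly in $\omega,R$ precisely because the right-hand side scales linearly in $s_1$ while the left-hand side scales quadratically.
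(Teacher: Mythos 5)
Your proposal follows essentially the same route as the paper's Appendix C: a logarithmic De~Giorgi estimate seeded by Lemma \ref{teo4} (which makes the test function vanish on the initial slice of $Q^{\bar t}_{R/2}(\hat\theta)$), tested against $(\varphi^2)'\xi^2$ with a cutoff between $K_{R/4}$ and $K_{R/2}$, a pointwise lower bound of order $(s_1-s_0-1)\ln 2$ for $\varphi$ on the bad set, a right-hand side growing only linearly in $s_1-s_0$, and a final choice of $s_1$ (and $A$) large. The cosmetic differences (working with $u^m-\mu_-$ rather than with $u$ and the levels $\hat k=(\mu_-+\omega/2^{s_0+1})^{1/m}$; starting the time integration at $\bar t$ rather than at $\bar t-\hat\theta$ and disposing of the earlier times trivially) are immaterial. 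Your explicit absorption of the quadratic-gradient part of $B$ into the dissipation generated by $\varphi''=(\varphi')^2$ is in fact the honest version of what the paper does (it requires the constant $C$ in \eqref{source} to be small, as the paper concedes in Appendix D).

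The one step that does not follow from the stated hypotheses is your treatment of the drift term via $\|\nabla v\|_{L^\infty}\leq\|\nabla v_0\|_{L^\infty}+\bar C_0\|u\|_{L^\infty}$: the initial datum is only assumed to satisfy $v_0\in W^{1,p}(\mathbb{R}^N)$ for a finite $p$, so $\|\nabla v_0\|_{L^\infty}$ is not available. The paper avoids this by invoking Lemma \ref{Heat} with $p=2q$, $p'=2$, $q=N/(N-2\kappa)$ to bound $\nabla v$ only in $L^{2q}$, then applying H\"older in $x$ and $t$ over the level sets $A_{k,R}(t)$; this produces the factor $\bigl(\int|A_{k,R}(t)|^{\tilde r/\tilde q}\,dt\bigr)^{2(1+\kappa)/\tilde r}$, which is estimated by $|K_{R/4}|\,(\omega/A)^{2\alpha(1+\kappa)/\tilde r}$ using the standing relation $(\omega/A)^{\alpha}>R^{\varepsilon}$. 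This last factor, multiplied against $(2^{s_1}/\omega)^2$, is precisely what the largeness of $A$ is used to control, so the $L^{2q}$--H\"older route is not merely a technical substitute but the mechanism that makes the drift and $\phi$ contributions small. Your argument would close once you replace the $L^\infty$ bound on $\nabla v$ by this $L^{2q}$ estimate.
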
 
The proof of the two lemmata  cannot be derived in a straightforward way from what is known in literature and for this reason we prefer to give a detailed proof of them. See Appendix  \ref{cap:Estimate Phi} for the Critical Mass Lemma
 and Appendix  \ref{cap:forevery} for the Expansion in time Lemma.

 Now, using Lemma \ref{for every nu} and 
  following the same argument
developed in  Chapter III, Section 6 of \cite{DIB},   in the   subcylinder $K_{\frac R4}\times 
(\bar t - \theta_0^{\alpha} (\frac R4)^2, \ 0)$, one can prove 
 that the numbers $\nu_1$ and $ s_1$  of Lemma \ref{for every nu}
 can be chosen a priori depending only upon the data and independent of $\omega$ and $R$, such that we have 
\begin{equation}
\label{R8}
u^m> \mu_- + \frac \omega{2^{s_1+1 } } \quad a.e. \ (x,t) \in K_{\frac R8}\times 
\Big(\bar t -\theta_0^{\alpha} \Big(\frac R8\Big)^2, \ 0 \Big).
\end{equation}
This concludes  the $1^{st} $ alternative, because we have proved 
 the reduction of the oscillation  in that sub cylinder. Thanks to the expansion of positivity in time (for more detail about this now a classic tool, see \cite{DIB}) we are able to transport this information to the top of the original cylinder.
  In fact, we have shown that in the sub cylinder located at the top, 
 there exist  numbers
$ \eta_1=\big(1-  \frac{1 } { 2^{s_1+1}} \big), $ and $\mathcal {A}_1 >A$
  that can be determined a priori in terms of the data, such that 
 \begin{equation}
\label{oscillation1} 
either \ \quad \quad   \underset{Q_{\frac R8}(\hat \theta)}{ess \ osc } \  u^m \leq \eta_1  \ \omega \quad \quad or \quad \omega \leq \mathcal {A}_1 R^{\frac {\epsilon } \alpha}.
\end{equation}


\section{$2^{nd} $ alternative for the parabolic-parabolic case}
\label{second pp} 
Suppose   that the assumption  of the first alternative is violated, i.e.
for all   subcylinders  $Q^{\bar t}_{R }(\hat \theta) \subset 
Q_{R}(\hat a)$
\begin{equation}
\label{violated}
\Big| \left\lbrace (x,t)\in\,Q^{\bar t}_{R }(\hat \theta): u^m(x,t)< \mu_- + \frac{\omega}{2^{s_o}}\right\rbrace \Big|> \, \nu \big|Q_{R }(\hat \theta)\big|,
\end{equation}
We can rewrite \eqref{violated} as
\begin{equation}
\label{violated2}
\Big|\left\lbrace (x,t)  \! \in \! Q^{\bar t}_{R}(\hat \theta): \! u^m(x,t)> \mu_+ - \frac{\omega}{2^{s_o}}\right\rbrace \Big|\leq \! (1 \! - \! \nu) \big|Q_{R}(\hat \theta)\big|.
\end{equation}
In order to estimate the measure of the set where $u^m(x, t)> \mu_+ - \frac{\omega}{2^{s_o}} $ within $K_R,$
we use the following Lemma proven in \cite{DIB}.
\begin{lemma}\label{Lem t star}(t* Lemma). 
Fix  $Q^{\bar t}_{R}(\hat \theta) \subset 
Q_{R}(\hat a)$  and assume that \eqref{violated2} holds. There exists a time level \,$\,t^*
\in [\bar t -\theta_0^{-\alpha}R^2, \bar t -\frac{\nu}{2}\theta_0^{-\alpha}R^2] $ such that
\begin{equation}
\label{t^*}
\Big| \! \left\lbrace x\in K_R \! : u^m(x,t^*) \! > \mu_+ \! - \frac{\omega}{2^{s_o}}\right\rbrace \! \Big| \! \leq  \! \frac{1-\nu}{1-\frac{\nu}{2}} \  |K_R|.
\end{equation}
\end{lemma}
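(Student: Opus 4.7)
The statement is a one-dimensional measure-theoretic pigeonhole principle applied to the time variable, and I would prove it by contradiction using Fubini's theorem. The idea is standard: the hypothesis \eqref{violated2} controls the total mass (in space-time) of the set where $u^m$ is close to $\mu_+$, so not every time slice in the prescribed subinterval can have a bad set of space-measure larger than the stated threshold, or else the double integral would exceed the allowed total.

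More concretely, set
\[
E(t)=\Big\{x\in K_R:\ u^m(x,t)>\mu_+-\tfrac{\omega}{2^{s_0}}\Big\},\qquad I^*=\Big[\bar t-\theta_0^{-\alpha}R^2,\ \bar t-\tfrac{\nu}{2}\theta_0^{-\alpha}R^2\Big].
\]
The length of $I^*$ is $(1-\tfrac{\nu}{2})\theta_0^{-\alpha}R^2$, which is a fraction $(1-\tfrac{\nu}{2})$ of the height of the cylinder $Q^{\bar t}_R(\hat\theta)$. Assume, for contradiction, that
\[
|E(t)|>\frac{1-\nu}{1-\tfrac{\nu}{2}}\,|K_R|\qquad\text{for every } t\in I^*.
\]
Integrating this inequality over $t\in I^*$ and applying Fubini's theorem, I would obtain
\[
\big|\{(x,t)\in Q^{\bar t}_R(\hat\theta):\ u^m>\mu_+-\tfrac{\omega}{2^{s_0}}\}\big|\;\geq\;\int_{I^*}|E(t)|\,dt\;>\;(1-\nu)\,\theta_0^{-\alpha}R^2\,|K_R|,
\]
which is strictly greater than $(1-\nu)\,|Q^{\bar t}_R(\hat\theta)|$. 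This directly contradicts \eqref{violated2}, so the negation must fail and some $t^*\in I^*$ with the desired property must exist.

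There is really no substantive obstacle here: the argument is a pure averaging trick, and the particular choice of the upper endpoint $\bar t-\tfrac{\nu}{2}\theta_0^{-\alpha}R^2$ is dictated precisely to make the constants in the contradiction match up. The only point worth checking carefully is measurability of $t\mapsto|E(t)|$, which follows from $u$ being a measurable function on $Q^{\bar t}_R(\hat\theta)$ together with Fubini's theorem. The slightly subtle piece, conceptually, is the role of this lemma downstream: the time level $t^*$ is forced to lie below the top of the cylinder by at least $\tfrac{\nu}{2}\theta_0^{-\alpha}R^2$, leaving a non-trivial time strip $[t^*,\bar t]$ over which one can later propagate the smallness of $|E(t)|$ via energy/logarithmic estimates in the second alternative. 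That is why the endpoint is $\bar t-\tfrac{\nu}{2}\theta_0^{-\alpha}R^2$ rather than $\bar t$ itself, but for the proof of this lemma it is only a numerical choice.
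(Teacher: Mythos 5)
Your proof is correct and is exactly the standard averaging/contradiction argument that the paper itself does not reproduce but delegates to DiBenedetto's monograph. One cosmetic point: $(1-\nu)\,\theta_0^{-\alpha}R^2\,|K_R|$ \emph{equals} $(1-\nu)\,|Q^{\bar t}_R(\hat\theta)|$ rather than strictly exceeding it, but the strict inequality already obtained in the integration step suffices to contradict \eqref{violated2}.
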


 Now let us   evaluate the measure of the set
$\left\lbrace x\in\,K_R: u^m(x,t)> \mu_+ - \frac{\omega}{2^{s_1}}\right\rbrace,$ $t\,\in[t^*, 0],$
where $ s_1\! > s_o $ is an integer to be fixed later on. \\
At this end, pick 
$ H= \underset{K_R\times [t^*,0]}{\text{ ess sup}}\left( u^m-(\mu_+ - \frac{\omega}{2^{s_o}})\right).$
In $\, K_R \times [t^*,0] ,$  consider the function
\begin{equation}
 \varphi(H)\,=\varphi(u^m) = log^+ \left( \frac{H}{H- (u^m -k) +c} \right),\ k = \mu_+ -\frac{\omega}{2^{s_o}}, \ c = \frac{\omega}{2^{s_1}}.
\end{equation}
Note that
\begin{equation}   
\begin{aligned}
&\varphi'(u^m)  = \frac{1}{H-(u^m -k)
 + c} > \frac{H}{H- (u^m -k) +c}\geq 1,  \\
& \varphi''(u^m) =  (\varphi'(u^m))^2, \  \ 
\nabla \varphi= \varphi'\nabla u^m,\,\,\,\nabla\varphi' =\varphi'^{2}\nabla u^m.
 \end{aligned}
  \end{equation}
  
   \begin{lemma}(Logarithmic Lemma)\\
  \label{log}  
  There exists  an integer $s_1\,> s_0$,\, such that if 
  $ H > \frac{\omega}{2^{s_1}},$      
  then
  $$\Big|\left\lbrace x\in\,K_R: u^m(x,t)> \mu_+ - \frac{\omega}{2^{s_1}}\right\rbrace\Big|  \,\leq \Big(1- \ \frac{\nu^2}{4}\Big) \ |K_R|,\,\,\forall t\,\in[t^*, 0].$$
\end{lemma}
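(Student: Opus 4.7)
The plan is to follow the classical logarithmic estimate of DiBenedetto, adapted to this setting by absorbing the chemotactic drift via the a priori bound on $\nabla v$ coming from Lemma \ref{Heat} and the Hadamard source via the natural growth condition \eqref{source}. First I introduce a cut-off $\zeta(x)\in C_c^\infty(K_R)$ with $\zeta\equiv 1$ on $K_{(1-\sigma)R}$ and $|\nabla\zeta|\le (\sigma R)^{-1}$, and use the Steklov-averaged weak formulation \eqref{weak steklov} with the test function $\psi=2\zeta^{2}\,\varphi(u^m)\,\varphi'(u^m)$, which is admissible since $u$ is locally bounded and $\varphi,\varphi'$ are smooth and bounded on the range of $u^m$. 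After passing $h\to 0$ and integrating in time on $[t^*,t]$, I obtain an inequality of the form
\begin{equation*}
\int_{K_R}\zeta^{2}\varphi^{2}(u^{m}(\cdot,t))\,dx+\int_{t^{*}}^{t}\!\!\int_{K_R}2\zeta^{2}(1+\varphi)(\varphi')^{2}|\nabla u^{m}|^{2}\,dx\,d\tau
\le \int_{K_R}\zeta^{2}\varphi^{2}(u^{m}(\cdot,t^{*}))\,dx+\mathcal R,
\end{equation*}
where $\mathcal R$ collects the terms coming from $\nabla\zeta$, the drift and the source, and I have used $\varphi''=(\varphi')^{2}$ so that the diffusion term produces the factor $(1+\varphi)$.

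Next I estimate $\mathcal R$. The $\nabla\zeta$-contribution is harmless because $|\nabla\zeta|\le (\sigma R)^{-1}$ and $\varphi\varphi'\le C\varphi$ is bounded in $L^{\infty}$ by a constant depending on $s_1-s_0$ times $2^{s_1}/\omega$, but on the support of $\zeta$ this produces a quantity controlled uniformly after dividing by $|K_R|$. For the drift term $\chi u^{\mathfrak q-1}(\nabla v,\nabla\psi)$, I expand $\nabla\psi$, use Young's inequality to split off $\epsilon\,\zeta^{2}(\varphi')^{2}|\nabla u^{m}|^{2}$ (absorbed into the LHS) and a term of the form $C(\varepsilon,\|u\|_\infty)\zeta^{2}(\varphi')^{2}|\nabla v|^{2}$; the latter is controlled by Lemma \ref{Heat}, which provides a uniform $L^p$ bound on $\nabla v$. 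For the source, \eqref{source} gives $|B\psi|\le 2C\zeta^{2}\varphi\varphi'|\nabla u^{m}|^{2}+2\zeta^{2}\varphi\varphi'|\phi|$; again Young's inequality combined with the $(1+\varphi)$ factor on the LHS allows absorption of the $|\nabla u^m|^2$ piece, while the $\phi$-piece is integrated using the parabolic $L^{q,r}$-norm of $\phi$.

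The third step is to estimate the initial datum at $t^*$. Since $\varphi(u^{m}(\cdot,t^{*}))\neq 0$ only on the set $\{u^{m}(\cdot,t^{*})>k\}$, Lemma \ref{Lem t star} bounds its measure by $\frac{1-\nu}{1-\nu/2}|K_R|$, and on that set one has the elementary pointwise estimate $\varphi(u^{m})\le\log(H/c)\le (s_{1}-s_{0})\log 2$, whence
\begin{equation*}
\int_{K_R}\zeta^{2}\varphi^{2}(u^{m}(\cdot,t^{*}))\,dx\le (s_{1}-s_{0})^{2}(\log 2)^{2}\,\frac{1-\nu}{1-\nu/2}\,|K_R|.
\end{equation*}
Putting everything together gives a bound of the form $\int_{K_R}\zeta^{2}\varphi^{2}(u^{m}(\cdot,t))\,dx\le [(s_{1}-s_{0})^{2}\frac{1-\nu}{1-\nu/2}+C_*]\,|K_R|$ for all $t\in[t^{*},0]$, with $C_*$ independent of $s_{1}$.

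Finally I extract the measure estimate. On the set $\{u^{m}(\cdot,t)>\mu_{+}-\omega/2^{s_{1}}\}$ the hypothesis $H>\omega/2^{s_{1}}$ implies $H-(u^{m}-k)+c\le 2c$, hence $\varphi(u^{m})\ge (s_{1}-s_{0}-1)\log 2$. Combining with the $L^{2}$-bound just proved on $K_{(1-\sigma)R}$, and afterwards estimating the thin slab $K_{R}\setminus K_{(1-\sigma)R}$ trivially by $C\sigma |K_R|$, yields
\begin{equation*}
|\{x\in K_R:\,u^{m}(x,t)>\mu_{+}-\omega/2^{s_{1}}\}|\le \Big(\frac{(s_{1}-s_{0})^{2}}{(s_{1}-s_{0}-1)^{2}}\,\frac{1-\nu}{1-\nu/2}+\frac{C_*}{(s_{1}-s_{0}-1)^{2}}+C\sigma\Big)|K_R|.
\end{equation*}
Choosing $\sigma$ small and then $s_{1}$ sufficiently large so that the bracket is $\le 1-\nu^{2}/4$ (a direct computation since $\frac{1-\nu}{1-\nu/2}=1-\frac{\nu/2}{1-\nu/2}\le 1-\nu/2\le 1-\nu^{2}/2$ for $\nu\in(0,1)$) gives the claim. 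The main technical obstacle is the simultaneous absorption of the quadratic gradient term arising from the Hadamard condition on $B$ and the cross term coming from the drift: both produce factors of $(\varphi')^{2}|\nabla u^{m}|^{2}$, and it is crucial that the coefficient $(1+\varphi)$ on the LHS dominates the coefficients $\varphi\varphi'$ coming from $\psi$ and $C$ from \eqref{source} uniformly in $s_{1}$.
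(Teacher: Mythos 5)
Your overall architecture is the paper's: same logarithmic function, same cut-off on $K_{(1-\sigma)R}$, same use of Lemma \ref{Lem t star} for the datum at $t^*$, same lower bound $\varphi\ge (s_1-s_0-1)\ln 2$ on the bad set, same final division by $(s_1-s_0-1)^2\ln^2 2$ plus the $N\sigma|K_R|$ correction for the annulus. Two points, one minor and one substantive.

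Minor: your test function $\psi=2\zeta^2\varphi(u^m)\varphi'(u^m)$ (with $\varphi'$ the derivative in $u^m$) does not turn the term $u_t\psi$ into $\partial_t\bigl(\varphi^2(u^m)\bigr)\zeta^2$; you need the extra weight $mu^{m-1}$, i.e. $\psi=m(u^m)^{\alpha}(\varphi^2(u^m))'\xi^2$ as in the paper, and this weight then propagates $\mu_+^{m-1}$-type constants through all the remainder terms.

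Substantive gap: your claim that the remainder $C_*$ is \emph{independent of} $s_1$, so that $C_*/(s_1-s_0-1)^2\to 0$, is false. The drift and source contributions necessarily carry a factor $(\varphi')^2\le c^{-2}=\bigl(2^{s_1}/\omega\bigr)^2$ (and the $\phi$-term a factor $c^{-1}$), so the corresponding pieces of $C_*$ grow like $4^{s_1}$ and cannot be killed by dividing by $(s_1-s_0-1)^2$. This is exactly why the paper's bound \eqref{IJL1L2} retains the small factor $\bigl(\tfrac{2^{s_0}}{A}\bigr)^{\frac{2\alpha(1+\kappa)}{\tilde r}}$, obtained from the measure estimate \eqref{3eq.A^+} over the time-length $\theta_0^{-\alpha}R^2$ together with the standing assumption $(\omega/A)^\alpha>R^{\varepsilon}$; the terms $\mathsf{B}_1,\mathsf{B}_2$ are then made $\le\nu^2/8$ by choosing the geometric parameter $A$ large \emph{after} $\sigma$ and $s_1$ are fixed. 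Your argument never introduces $A$ (nor any substitute for it), so the absorption of the chemotactic and $\phi$ terms in the last step does not close. Only the $|\nabla\xi|^2$-term (the paper's $\mathsf{C}$) is genuinely of the form ``bounded numerator over $(s_1-s_0-1)^2$'' and is handled the way you describe.
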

 
 \begin{proof}
Split \,\,$K_R = K_{(1-\sigma)R} +K_{\sigma R}, 0 <\sigma< 1.$
 Making use of the definition of weak solution, via the Steklov average, 
  pick as test function $ \,\,\psi(u_h^m) = m(u_h^m)^\alpha(\varphi^2(u_h^m))' \xi^2,$ where $\xi = \xi(x)$ \  is a cut-off  function, $ \xi = 1\,\text{in the cube }\,\, K_{(1-\sigma)R},  \\  \ \xi =0 \,\,\text{on} \ \partial K_R,\,
  |\nabla \xi|\leq \frac{2}{\sigma R}.$\\\
  For any \,\, $t^*\leq t\leq 0$,
 letting  \,$h\rightarrow \,0$, we get
 \begin{equation}
\begin{aligned} 
\iint_{K_R\times (t^*\!,t)} \! \! (\varphi^2(u^m)  \xi^2)_t dx d\tau = \!\int_{K_R\times\{t\}} \!\varphi^2(u^m)  \xi^2dx - \int_{K_R\times\{t^*\}} \! \varphi^2(u^m)  \xi^2dx.
\end{aligned}
 \end{equation} 
Define
\begin{equation}
\begin{aligned} 
&-\int_{t^*}^{t}\int_{K_R}(\nabla u^m)\cdot\nabla  \psi dxd\tau 
+ \int_{t^*}^{t}\int_{K_R}(u^{q-1}\nabla v)\cdot\nabla  \psi dx d\tau \\
&+ \  \int_{t^*}^{t}\int_{K_R} C|\nabla u^m|^2  \psi \ dx d\tau + \int_{t^*}^{t}\int_{K_R} \phi \psi dxd\tau\\
&=: \ -\int_{t^*}^{t}Id\tau\,+\int_{t^*}^{t}Jdt\tau + \int_{t^*}^{t}L_1 d\tau+ \int_{t^*}^{t}L_2 d\tau
\end{aligned}
  \end{equation}
and,   from the definition of weak solution, get the following inequality
 \begin{equation}
 \label{weakloglemma}
\begin{aligned} 
&\int_{K_{(1-\sigma) R}\times\{0\}} \ \varphi^2(u^m)\xi^2  \ dx\leq
\int_{K_{ R}\times\{0\}}\ \varphi^2(u^m)  \xi^2 \ dx  \\
& = \int_{K_R\times\{t^*\}} \ \varphi^2((u^m)  \xi^2 \ dx +\int_{t^*}^{0}(-I\,+J +L_1+L_2)  \ dt.
 \end{aligned}
\end{equation}

  To estimate the term\,\, $\int_{K_{(1-\sigma ) R}\times\{t\}} \varphi^2(u^m)  \xi^2 \ dx $\,\, let\\
  $\bar P= \{x\in K_{(1-\sigma) R}:u^m(x, t)> \mu_+  -\frac{\omega}{2^{s_1}}\}$ , with
 $ t^*<t<0.$  \\
  Using the notation  $  \varphi(u^m) = \varphi(H),$ in \,\,$\bar P$ \  we have 
 $ \varphi(H) \geq   (s_1 - s_0 -1) \ln2  $ (to prove this estimate we refer the reader to Chapter II, Section 3-(ii) in \cite{DIB}). 
Hence by Lemma  \ref{Lem t star} 
  \begin{equation}
  \label{second term}
  \int_{K_{R}\times\{t^*\}}\varphi^2(u^m)  \xi^2 \ dx \leq \ln^2 2(s_1 -s_0)^2 \  \Big( \frac{1-\nu}{1-\frac{\nu}{2}} \Big) \ |K_R|.
  \end{equation}
and

 \begin{equation}
 \label{first term} 
\begin{aligned} 
&\int_{\bar P\times\{t\}} \varphi^2(u^m)dx \\
& >  (s_1 - s_0 -1)^2  \ln^2 2 \Big| \Big\{x\in K_{(1-\sigma) R}:u^m(x, t)> \mu_+  -\frac{\omega}{2^{s_1}}\Big\}\Big|;
\end{aligned}
  \end{equation}
  
 For the second term on the right of \eqref{weakloglemma} we have
the following estimate, which proof, for the sake of readability of the paper, 
  is postponed  in Appendix  \ref{cap:stimaIJL1L2}.
 \begin{equation} 
 \label{IJL1L2} 
\begin{aligned} 
&\int_{t^*}^{0}( -I +J +L_1+L_2) dt\\
&\leq 3m\,I_u(1+\mu_+ )
(1+(s_1 - s_0)\ln2  \Big( \frac{2^{s_1}}{\omega}\Big)^2 \mu_+^{\frac{2\mathfrak q-3}{m}} \Big(\frac{2^{s_0}}{A} \Big)^{\frac{2 \alpha(1+k) } {\tilde r } } |K_R| \\
&+ 2m \  \mu_+^{m-1} (s_1-s_0)\ln 2 \ \frac{2^{s_1} } {\omega} || \phi||_{q,r,Q_{R}(\hat \theta)} \ \Big( \frac{2^{s_0} }{ A}\Big)^{\frac{2 \alpha(1+k) } {\tilde r } }|K_R|
\\
&+6m \gamma \frac{(2^{s_0})^\alpha}{ \omega^\alpha}  \frac{(s_1 -s_0 )\ln2 \  \mu_+^{m-1}}{ \sigma^2} |K_R|. 
\end{aligned}  
\end{equation}
 By inserting \eqref {first term}, 
  \eqref {second term}
  and  \eqref{IJL1L2} \  in \  \eqref{weakloglemma} and 
      dividing by 
      $ (s_1 - s_0 -1)^2 \ \ln^2 2 $, 
  we obtain
 \begin{equation}  
\begin{aligned} 
&|\{x\in K_{ R}:u^m(x,t)> \mu_+  -\frac{\omega}{2^{s_1}}\} |\\
& \leq \left( \frac{s_1 -s_0}{s_1 -s_0 -1}\right)^2\left(  \frac{1-\nu}{1-\frac{\nu}{2}}\right) |K_R|  + 
3m\, I_u(1+\mu^+ )\cdot\\
&\frac{(1+(s_1 - s_0)\ln2 )}{(s_1 -s_0 -1)^2\ln^22}\left( \frac{2^{s_1}}{\omega}\right)^2 \ \mu_+^{\frac{2q-3}{m}} \,\Big(\frac{2^{s_0}}{A}\Big)^{\frac{2 \alpha(1+k) } {\tilde r } } |K_R| \\
&+ 2m \frac{ \mu_+^{m-1} (s_1-s_0)} { (s_1 -s_0 -1)^2 \ln2}  \frac{2^{s_1} } {\omega} || \phi||   \  \Big( \frac{2^{s_0} }{ A}\Big)^{\frac{2 \alpha(1+k) } {\tilde r } }|K_R|
\\
&+6\gamma m \frac{(2^{s_0})^\alpha}{ \omega^\alpha} \frac{(s_1 -s_0 )\,\mu_+^{m-1}}{ \sigma^2 \ (s_1 -s_0 -1)^2\ln2} |K_R| +N\sigma |K_R |  \\
& \equiv ( \mathsf{A}+ \mathsf{B}_1+\mathsf{B}_2+\mathsf{C}+N \sigma) \ |K_R|,
 \end{aligned}  
  \end{equation}
  where we used the fact that
 \begin{equation*}   
 \Big|  \Big\{x\in K_{ R}\!:u^m>\! \mu_+ \! -\frac{\omega}{2^{s_1}} \Big\}  \Big|\leq \!
 \Big|  \Big\{x\in K_{(1-\sigma) R}\!:u^m> \mu_+ \! -\frac{\omega}{2^{s_1}} \Big\} \Big|+N\sigma  |K_R |.
  \end{equation*}   

  Choosing  \,\,$ \sigma$\,\, such that\,\,$ N\sigma \leq    
   \frac{1}{4}\nu^2,$ 
  $ s_1$ such that\,\, $\left( \frac{s_1 -s_0}{s_1 -s_0 -1}\right)^2\leq (1-\frac{1}{2}\nu)(1+\nu)$, \ we have 
$\mathsf{A}\leq 1- \nu^2$  and  \ 
$ \mathsf{C} \leq \frac{1}{4}\nu^2$
 and for such $\sigma$ and $ \ s_1,$ let  $A $\,\, be such that  $    
 \mathsf{B}_1 \leq \frac{1}{8}\nu^2$ \ 
and
$ \mathsf{B}_2 \leq \frac 18 \nu^2$
and this implies the statement of  Lemma \ref{log}.
\end{proof}

The second alternative is concluded estimating the measure of the set where $u^m(x, t)> \mu_+ - \frac{\omega}{2^{s^*}} , s^*>s_2 $ within a sub cylinder of  $Q_R(\frac {\hat a}2)$. This can be done via the following two lemmata which proofs can be deduced from Lemma  8.1 and Lemma 9.1 in \cite{DIB}.
 \begin{lemma}
 For every $\nu^*\in(0,1),$ there exists $s^*>s_2,$ independent on $R, \omega$ such that
 \begin{equation}
 \Big|  \Big\{x\in Q_R(\frac {\hat a}2):u^m(x,t)> \mu_+  -\frac{\omega}{2^{s^*}}  \Big\}  \Big|\leq \nu^* \ |Q_R(\frac {\hat a}2) |
\end{equation} 
with $A= 2^{s^*}, \ a_0=\frac{\omega }A$.
  \end{lemma}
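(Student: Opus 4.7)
My plan is to close the second alternative via the classical De Giorgi measure-shrinking step (Chapter III, Lemma 8.1 of \cite{DIB}), upgrading the slice-wise bound of the Logarithmic Lemma \ref{log} to a bound on the full cylindrical measure by a finite-range iteration on the levels $k_n = \mu_+ - \omega/2^n$, $n \ge s_1$.

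Concretely, set $k_n := \mu_+ - \omega/2^n$ and
$$A_n(t) := \{x \in K_R : u^m(x,t) > k_n\}, \qquad A_n := \{(x,t) \in Q_R(\hat a/2) : u^m(x,t) > k_n\}.$$
Lemma \ref{log} provides $|K_R \setminus A_{s_1}(t)| \ge (\nu^2/4)|K_R|$ for every $t \in [t^*,0]$. I apply the slice-wise De Giorgi isoperimetric lemma (in $L^2$ form) with $k = k_{n+1}$, $\ell = k_n$, using $K_R \setminus A_{s_1}(t)$ as the ``fat'' cold set, to obtain
$$\frac{\omega}{2^{n+1}}\,|A_{n+1}(t)| \le \frac{C R}{\nu^2}\int_{A_n(t) \setminus A_{n+1}(t)} |\nabla u^m|\,dx.$$
Integrating in time, applying Cauchy--Schwarz on $A_n \setminus A_{n+1}$, and using the super-level version of the energy estimate \eqref{EE} (which supplies $\iint_{A_n}|\nabla u^m|^2 \le C(\omega/2^n)^2 |A_n|/R^2$) yields, after the $\omega$-factors cancel,
$$|A_{n+1}|^2 \le C\,|Q_R(\hat a/2)|\,|A_n \setminus A_{n+1}|.$$
Summing for $n = s_1,\dots,s^*-1$, using $A_{s^*} \subset A_{n+1}$ and $\sum_n |A_n \setminus A_{n+1}| \le |A_{s_1}| \le |Q_R(\hat a/2)|$, one gets $(s^*-s_1)\,|A_{s^*}|^2 \le C\,|Q_R(\hat a/2)|^2$. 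Choosing $s^*$ so that $s^* - s_1 \ge C/(\nu^*)^2$ gives the claim.

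The main technical obstacle is to verify that the two nonstandard contributions in the energy estimate do not spoil the bound $\iint_{A_n} |\nabla u^m|^2 \le C(\omega/2^n)^2 |A_n|/R^2$: the chemotactic flux $u^{\mathfrak q-1}\nabla v$ is controlled through the a priori bound $\|\nabla v\|_{L^p} \le C(\|u\|_\infty)$ of Lemma \ref{Heat}, together with $\mathfrak q \ge \max\{(m+1)/2,2\}$ and the $L^\infty$ bound on $u$; the Hadamard source's quadratic piece $C|\nabla u^m|^2$ is absorbed into the coercive gradient on the left-hand side of \eqref{EE} by letting $\mathbf{C}$ depend on $\|u\|_\infty$, while the lower-order piece $\phi \in L^{q,r}$ contributes the factor $|A_{k,R}|^{2(1+\kappa)/\tilde r}$ already present in \eqref{EE}, which is of higher order under the iteration and therefore absorbable. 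Once these are handled, the resulting $s^*$ depends only on the data and on $\nu^*$, independently of $\omega$ and $R$, because all the constants appearing in the iteration are homogeneous in these parameters, exactly as in the proof of Lemma 8.1 in \cite{DIB}.
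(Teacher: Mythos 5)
Your proposal is correct and follows essentially the same route as the paper, which for this lemma simply defers to Lemma 8.1 of Chapter III in \cite{DIB}: you have reconstructed exactly that argument (slice-wise De Giorgi isoperimetric inequality fed by the cold set from the Logarithmic Lemma, the super-level energy estimate, and the telescoping sum over levels $k_n=\mu_+-\omega/2^n$), and you correctly identify that the only adaptations needed are the control of the chemotactic and Hadamard terms already built into \eqref{EE} and the choice $A=2^{s^*}$ that makes the intrinsic time scaling uniform over the levels.
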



   \begin{lemma}
 The number $\nu^*$ (and $s^*$) can be chosen  such that
 \begin{equation}
 \label{nu}
u^m(x,t) \leq  \mu_+  -\frac{\omega}{2^{s^*+1}}, \quad a.e. \ in \  Q \Big(\frac R 2, \frac 12  \Big(\frac{\omega}{2^{s*}}\Big)^{-\alpha}
\left(\frac R2\right)^2 \Big).
\end{equation}  
 \end{lemma}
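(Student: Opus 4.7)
The plan is to run a De Giorgi iteration on a nested sequence of cylinders shrinking to $Q(R/2,\tfrac12(\omega/2^{s^*})^{-\alpha}(R/2)^2)$, starting from the measure smallness supplied by the preceding lemma. Set
\[
 R_n=\tfrac R2+\tfrac R{2^{n+1}},\qquad k_n=\mu_+-\tfrac{\omega}{2^{s^*+1}}-\tfrac{\omega}{2^{s^*+1+n}},
\]
and take $\tau_n=\tfrac12(\omega/2^{s^*})^{-\alpha}R_n^2$, so that $Q_n=K_{R_n}\times[-\tau_n,0]$ decreases to the target cylinder while $k_n\searrow\mu_+-\omega/2^{s^*+1}$ with $k_0=\mu_+-\omega/2^{s^*}$. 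Choose cut-off functions $\zeta_n$ equal to $1$ on $Q_{n+1}$, vanishing on the parabolic boundary of $Q_n$, with $|\nabla\zeta_n|\le 2^{n+2}/R$ and $|(\zeta_n)_t|\le 2^{n+2}/(\tau_n-\tau_{n+1})$. Let $A_n=\{(x,t)\in Q_n:u^m>k_n\}$ and $Y_n=|A_n|/|Q_n|$.

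Next, I apply the local energy estimate (Lemma \ref{EnEst}) in the $(u^m-k_n)_+$ version over $Q_n$, with cut-off $\zeta_n$. The first two terms on the right are controlled by $\omega/2^{s^*+n}$ times geometric factors of the form $4^n R^{-2}|A_n|$, while the Hadamard contribution from $B$ produces a term proportional to $\bigl(\int|A_n(t)|^{\tilde r/\tilde q}dt\bigr)^{2(1+\kappa)/\tilde r}$, which is likewise controlled by $|A_n|^{1+\delta}$ for some $\delta=\delta(N,\tilde q,\tilde r,\kappa)>0$ once $\kappa$ has been fixed small enough. The drift term $\chi\,\mathrm{div}(u^{\mathfrak q-1}\nabla v)$ is absorbed using the a priori bound on $\nabla v$ from Lemma \ref{Heat} and the assumed $L^\infty$ bound on $u$; this produces only lower-order contributions of the same geometric type.

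Then I combine with the Sobolev inequality \eqref{sob2} applied to $\zeta_n(u^m-k_n)_+$ and use the fact that on $A_{n+1}$ one has $(u^m-k_n)_+\ge k_{n+1}-k_n=\omega/2^{s^*+2+n}$. After rescaling to remove the intrinsic factor $(\omega/2^{s^*})^{-\alpha}$ in the time direction, this yields a recursive inequality
\[
 Y_{n+1}\le C\,b^{\,n}\,Y_n^{1+\delta}
\]
with $C,b>1$ and $\delta>0$ depending only on the data and on $\|u\|_\infty$. The one-sequence version of the Fast Geometric Convergence Lemma \ref{fastconv} (take $X_i\equiv 0$, or apply the analogous standard lemma) then guarantees $Y_n\to 0$ provided $Y_0\le(2C)^{-1/\delta}b^{-1/\delta^2}=:\nu^*$. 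Since $Y_0\le \nu^*$ is exactly the conclusion of the previous lemma, we obtain $|A_\infty|=0$, i.e.\ $u^m\le \mu_+-\omega/2^{s^*+1}$ a.e.\ in the limiting cylinder, which is precisely \eqref{nu}.

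The main difficulty lies in arranging the correct intrinsic time scaling: the cylinders must shrink with the factor $(\omega/2^{s^*})^{-\alpha}$, yet the energy inequality and the embedding \eqref{embed} must reproduce exactly this scaling when passing from $Q_n$ to $Q_{n+1}$, so that the Hadamard term $|A_n|^{2(1+\kappa)/\tilde r}$ stemming from $B$ contributes a superlinear power of $Y_n$ with coefficient depending only on the data. This bookkeeping, analogous to Chapter IV of \cite{DIB} but complicated by the drift and the porous-medium nonlinearity, is what forces the specific choice of $\tilde q,\tilde r,\kappa$ and, ultimately, of the threshold $\nu^*$.
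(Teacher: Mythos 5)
Your overall architecture is the right one and matches what the paper intends: the paper gives no proof of this lemma, deferring to Lemma~9.1 of Chapter~III in \cite{DIB}, and the intended argument is exactly a De~Giorgi iteration on cylinders shrinking at the intrinsic time scale $(\omega/2^{s^*})^{-\alpha}$, with levels $k_n\nearrow\mu_+-\omega/2^{s^*+1}$, seeded by the measure smallness of the preceding lemma and closed by fast geometric convergence. Your choices of $R_n$, $k_n$, $\tau_n$ and the initial cylinder $Q_0=Q_R(\hat a/2)$ are all consistent with that.

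The gap is in the reduction to a single sequence. You claim the term $\bigl(\int|A_n(t)|^{\tilde r/\tilde q}\,dt\bigr)^{2(1+\kappa)/\tilde r}$ coming from $\phi$ (and the same term arises from the drift, cf.\ \eqref{MM}) is controlled by $|A_n|^{1+\delta}$. With the paper's normalization \eqref{eq.rtil} one has $\tfrac{2(1+\kappa)}{\tilde r}=1-\tfrac1r<1$, so bounding $|A_n(t)|$ pointwise by $|K_{R_n}|$ yields at best $C|A_n|^{1-1/r}$, a \emph{sublinear} power of $|A_n|$; choosing $\kappa$ small does not change this, since $\kappa$ and $\tilde r$ are tied together by \eqref{eq.rtil}. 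Even after multiplying by the Sobolev factor $|A_n|^{2/(N+2)}$ you only get superlinearity when $\tfrac{2}{N+2}>\tfrac1r$, which is not guaranteed by the hypotheses on $\phi$. Consequently the recursion $Y_{n+1}\le C\,b^{\,n}Y_n^{1+\delta}$ cannot be derived, and the one-sequence convergence lemma is not applicable. This is precisely why the paper sets up the \emph{two-sequence} fast geometric convergence Lemma~\ref{fastconv} and, in Appendix~B, tracks both $X_n=|Z_n|/|Q_n|$ and $Y_n=\bigl(\int|Z_n(z)|^{\tilde r/\tilde q}dz\bigr)^{2/\tilde r}/|K_{R_n}|$, obtaining the coupled recursions $X_{n+1}\le c\,b^nig(X_n^{1+\hat\alpha}+X_n^{\hat\alpha}Y_n^{1+\kappa})$ and $Y_{n+1}\le c\,b^n(X_n+Y_n^{1+\kappa})$, where the superlinearity of the problematic term is realized as $Y_n^{1+\kappa}$ rather than as a power of $|A_n|$. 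To repair your proof, replicate that two-sequence bookkeeping here (and take $\nu^*$ from the smallness threshold of Lemma~\ref{fastconv} applied to both $X_0$ and $Y_0$, which requires checking that the preceding lemma controls $Y_0$ as well, not only $X_0$).
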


The reduction of the oscillation concludes 
 the $2^{st} $ alternative.   \qed

Following the approach by Di Benedetto (see \cite{DIB}, chap.III),
the two alternatives  imply  the H\"older continuity of $u^m$, hence
 Theorem  \ref{IntReg}  is proved in the parabolic-parabolic case.


 \section{H\"older continuity  to the parabolic-elliptic chemotaxis system }
\label {Holder}
 The aim of this section is to extend the results obtained in the previous sections for  the system  \eqref{first equ} with $\tilde \tau=1$, to   the following parabolic-elliptic  degenerate system ($\tilde \tau=0$) in $\mathbb{R}^N \times (t>0)$
\begin{equation}
\label{second equ}
\left\{ \begin{array}{l}
u_{t}=
\textrm{div}(\nabla u^m) 
   - \chi \textrm{div}(u^{{\mathfrak q}-1}  \nabla v) + B(x,t,u, \nabla u),      \\[6pt]
0=  \Delta v-  a v+  \ u ,    
\end{array} \right.
\end{equation}
with nonnegative initial data satisfying \eqref{init data}. 
Our approach is unitary and does not see the difference between the parabolic-parabolic and parabolic-elliptic cases. For this reason 
the proof of H\"older continuity of $u^m$ follows almost entirely  the steps of  the Sections \ref{first pp} and  \ref{second pp} for the parabolic-parabolic case. In this section we will focus our attention only on the main differences.
First, let us  state a-priori elliptic $L^p$ estimates.\\
Consider the elliptic equation
 \begin{equation}
 \label{ell2}
- \Delta v + a v =  w , \quad  x\in \mathbb{R}^N.
 \end{equation}
 By  classical $L^p$ regularity results (\cite{CS}, \cite{CZ})


 \begin{equation}
 \label{G}
||v(x)||_{W^{2,p}(\mathbb{R}^N)}\leq  c ||w(x)||_{L^p(\mathbb{R}^N)}
\end{equation}
where $c$ is a constant depending upon $p, N$ and $a$.

Let us start now the study of  the $ 1^{st} $\ Alternative.

In order to extend the result  in Lemma  \ref{teo4} we must consider the terms containing $|\nabla v|,$ and there, instead of \eqref {grad v parab}, we have to  use the estimate \eqref{G}. The same must be done in the analogous of   Lemma 3.2. 
\\
Let us explain some details.
To construct the sequences $X_i$ and $Y_i$ the   
Lemma \ref{EnEst} must be  applied.
More precisely, we must change the estimate  of the term with $|\nabla v|$  ( see \eqref{gradv} in Appendix A) present in
\begin{equation}\label{gradv ell 1 alt}
 \begin{aligned}
 &  \int_{t_1}^{t_2}\int_{{K_R}\bigcap\{(k-u^m) >0\}}u^{2({\mathfrak q}-1)}\eta^2|\nabla v|^2dx d t\\
&\leq \mu_+^{\frac{2({\mathfrak q}-1)}{m}}\Big(\int_{t_1}^{t_2}  \left(\int_{K_R}|\nabla v|^{2q}dx\Big)^{\frac{r}{q}}d t \right)^{\frac{1}{r}}\!\! \Big(\! \int_{t_1}^{t_2}\Big|A_{k,R} (t) \Big|^{\frac{r(q-1)} {q(r-1)} }d t \Big) ^{\frac{r-1}{r}}.
\end{aligned}
  \end{equation} 

By using  \eqref{G}    with $w=u$ and $p=2q$, we  obtain
 \begin{equation}\label{gradv ell 1 alt bis}
 \Big(\int_{t_1}^{t_2} \!\! \left(\int_{K_R}|\nabla v|^{2q}dx\Big)^{\frac{r}{q}}d t \right)^{\frac{1}{r}} 
 \leq c  \Big(\int_{t_1}^{t_2}   ||u||_{2q}^{2r} d t  \Big)^{\frac 1 r }\leq   E_u,
  \end{equation} 
with $E_u$  a positive constant depending on $  \underset{t_1< t < t_2} { \sup} ||u||_{2q}.$
Replacing \eqref{gradv ell 1 alt bis} in \eqref{gradv ell 1 alt}  and using 
 \eqref{eq.rtil}  we get
\begin{equation}\label{gradv ell 1 alt final}
 \begin{aligned}
  \int_{t_1}^{t_2} \! \! \! \int_{{K_R}\bigcap\{(k-u^m) >0\}} \!\!\!  u^{2({\mathfrak q}-1)}\eta^2|\nabla v|^2dx d t 
 \leq    E_u  \mu_+^{\frac{2({\mathfrak q}-1)}{m}}  \!\!
 \left( \int _{t_1}^{t_2}\Big| A_{k,R}(t)\Big|^{\frac{\tilde r}{\tilde q}}dt\right)^{\frac{2}{\tilde r}(1 +\kappa)}. 
\end{aligned}
  \end{equation} 
Inserting  \eqref{gradv ell 1 alt final} in the computations of  Lemma \ref{teo4} and  checking the validity of  Lemma \ref{for every nu},  we derive also  for the parabolic-elliptic  case that the oscillation of $u^m$ is reduced by a fixed  factor. \\
For the   $2^{nd}$ Alternative,  we observe that Lemmata \ref{log} and  \ref{t^*}      hold by replacing in the estimate of $|\nabla v|$ the constant $I_u$ with the constant $E_u$ defined in \eqref{gradv ell 1 alt bis}.
So also in this case  the oscillation is reduced. \\
Exactly as in the parabolic-parabolic case, this implies  the  H\"older continuity  of $u^m$.




{\large {\bf  Appendix} }

For completeness we prove  some Lemmas used in proving  our results.
 \appendix

\section{\normalsize { Proof of Lemma \ref{EnEst}} }
\label{cap:Proof of Lemma}

Let us prove  \eqref{EE}
starting from the definition of weak solution  with the Steklov averages \eqref{weak steklov} and with $\psi=-(k-u^m)\eta^2$. Integrating from $t_1$ to $t_2$  and letting $h\to 0$, we obtain
\begin{equation}
\label{weak}
\begin{aligned}
& - \int_{\tilde Q} u_t (k-u^m)\eta^2  dx dt + \int_{\tilde Q}   \nabla u^m \cdot \nabla (-(k-u^m)\eta^2 )dxdt\\
& =  \int_{\tilde Q} \! \!
u^{{\mathfrak q}-1}  \nabla v \cdot  \nabla( -(k-u^m)\eta^2 \	 dx dt 
+   \int_{\tilde Q}\! B(-(k-u^m))\eta^2 dx dt,
\end{aligned}
\end{equation}
 where we have denoted by $\tilde Q:= K_R\times [t_1,t_2].$
 We rewrite \eqref{weak} as $ M_1 +M_2 =  M_3  + M_4$. \\
 For $k>u^m$  the following identity holds
 \begin{equation}\label{eq.id}
 \begin{aligned}
 - \! \int_{K_R}(k-u^m) \eta^2 u_t\, dx =\frac{1}{m} \int_{K_R}\frac{d}{dt} \left( \int_0^{k-u^m}(k-\xi)^{\frac{1}{m}-1}\xi d\xi  \right)  \eta^2  dx. \\
\end{aligned}
\end{equation} 
Setting  $\Phi(\xi) =: (k-\xi)^{\frac{1}{m}-1}\xi, $  integrating by parts  the right side of \eqref{eq.id} with respect to $t$   leads to
\begin{equation*}
 \begin{aligned}
& M_1 =
 \frac{1}{m}\int_{K_R\times t_2} \!\! \! \! \Big( \int_0^{k-u^m} \! \! \! \Phi(\xi) d\xi  \Big) \eta^2  dx 
-\frac{1}{m}\int_{K_R\times t_1}\!\!\! \Big( \int_0^{k-u^m}\Phi(\xi)  d\xi  \Big) \eta^2  dx\\
& -\frac{2}{m} \int_{\tilde Q}\Big( \int_0^{k-u^m} \!\Phi(\xi)  d\xi  \Big)\eta \ \eta_t \ dx dt=
M_{11} - M_{12} - M_{13}.
\end{aligned}
\end{equation*}   
 By standard  calculations we derive
\begin{equation*} 
 \begin{aligned}
 & M_2 
 =\int_{\tilde Q}|\nabla((k-u^m) \eta)|^2 dx dt -\int_{\tilde Q}(k-u^m)^2 |\nabla \eta|^2 dx dt =
  M_{21} -M_{22},
\end{aligned}
  \end{equation*}    
 \begin{equation*}
 \begin{aligned}
 &M_3 \!\!
=\int_{\tilde Q} \! -\eta \nabla((k-u^m)\eta)(u^{{\mathfrak q}-1}\nabla v)
  - \!\! \int_{\tilde Q}(k-u^m) \eta \nabla\eta (u^{{\mathfrak q}-1}\nabla v) =
  M_{31} + M_{32}
\end{aligned}
  \end{equation*} 
  By using Young inequality  we have   
  \begin{equation*}
 \begin{aligned}
 & M_{31} \leq 
  \frac{1}{2}\int_{\tilde Q}|\nabla(k-u^m)\eta|^2dx dt +\\
  & \frac{1}{2}\int_{t_1}^{t_2}\int_{{K_R}\cap\{(k-u^m) >0\}}u^{2({\mathfrak q}-1)}\eta^2|\nabla v|^2dxdt =
  M_{311} + M_{312}
 \end{aligned}
  \end{equation*}  
  
  In the same manner we obtain
 \begin{equation*}
 \begin{aligned}
 &M_{32} \leq
 \frac{1}{2}\int_{\tilde Q}(k-u^m)^2|\nabla \eta|^2 +\\
 &\frac{1}{2} \int_{t_1}^{t_2}  \int_{{K_R}\cap\{(k-u^m)>0\}}u^{2({\mathfrak q}-1)}\eta^2|\nabla v|^2 dxdt =
 M_{321}+  M_{322}.
\end{aligned}
\end{equation*}   

With $ A_{k,R} (t)= \{x \in K_R: (k-u^m(x,t))>0 \} $, applying   the  H\"older inequality first in the variable $x$ and  then in $t$   we obtain
\begin{equation}
\label{m322}	
\begin{aligned}
&M_{312}+M_{322}\leq
\mu_+^{\frac{2({\mathfrak q}-1)}{m}}\! \Big( \int_{t_1}^{t_2} \left(\! \int_{K_R } \!|\nabla v|^{2q}dx\Big)^{\frac{r}{q}}dt\right)^{\frac{1}{r}}\!\! \Big(\! \int_{t_1}^{t_2}\Big|A_{k,R} (t) \Big|^{\frac{r(q-1)} {q(r-1)} }dt\Big) ^{\frac{r-1}{r}}
\end{aligned}  
\end{equation}

We first  observe that,
  Lemma \ref{Heat} is satisfied with $ w = u $, $p= 2q, \ p'=2,  \frac{1}{2}-\frac{1}{2q }= \frac{\kappa}{N} \ , 
 q =\frac{N}{N-2\kappa}, \kappa$ in \eqref{eq.rtil}, then
  for $(t_2-t_1)$ small enough  there exists a constant $I_u>0$ such that
\begin{equation}
\label{gradv}	
\begin{aligned}
& \Big(\int_{t_1}^{t_2} \Big(\int_{K_R}|\nabla v|^{2q}dx\Big)^{\frac{r}{q}}dt\Big)^{\frac{1}{r}}\\
 \leq
&2^{1- {\frac{1}{r}}} \left[\! \bar C_0 \underset{t_1 <t < t_2}{\text{ sup}}\Big(\! \int_{K_R} \!|u(x,t)|^{2}dx\Big)^{\frac{1}{2}}\! +\!\Big(\! \int_{K_R}|\nabla v_0(x)|^{2q}\Big)^{\frac{1}{2q}} \right] ^{2} \! \!\!(t_2-t_1)^{\frac{1}{r}} \!=:I_u.
\end{aligned}  
\end{equation}
 By inserting \eqref{gradv} in \eqref{m322} and using  \eqref{eq.rtil} it follows
\begin{equation}
\label{MM}
\begin{aligned}
&M_{312}+M_{322}
 & \leq  I_u  \  \mu_+^{\frac {2(q-1) } {m } } \ \left( \int _{t_1}^{t_2}\Big| A_{k,R}(t)\Big|^{\frac{\tilde r}{\tilde q}}dt\right)^{\frac{2}{\tilde r}(1 +\kappa)}.
\end{aligned}  
\end{equation}
Note that\,\, $ I_u$\,\, is a constant depending on
$ || u||_{L^\infty(t_1,t_2)L^2(K_R))
}  $ and  $||\nabla v_0(x)||_{L^{2q}(K_R)}$.
 In order to estimate the term  \,$ M_4 $\, we recall that, since  we are using the truncation \, $ (k -u^m) $,\, we take \,$k = \mu^- + \frac{\omega}{2^{s_0}}.$ 
By assumptions on  $ B $\, and taking into  account that\, $(k -u^m) \leq \frac{\omega}{2^{s_0}} < 1 $
  \begin{equation*}
 M_4 \leq  C \! \!  \int_{\tilde Q} |\nabla(k- u^m)|^2 \eta^2 \ dx dt +  \int_{\tilde Q}\! \!  \phi(k-u^m)\eta^2dx dt=
 M_{41} + M_{42}
 \end{equation*} 
 An application of  the Young Inequality leads,
 for a suitable \,$\gamma >1,$ to
\begin{equation*}
 \begin{aligned}   
&M_{41}= C \int_{\tilde Q} |\nabla(k- u^m)|^2 \eta^2 dx dt \leq\\
 &\frac{C}{2}  \int_{\tilde Q}|\nabla(k-u^m)\eta|^2 + C \int_{\tilde Q}(k-u^m)^2|\nabla \eta|^2dx dt =
  M_{411}+M_{412}. 
\end{aligned}
 \end{equation*}  
 \begin{equation*}
 \begin{aligned}
 M_{42}= \!\! \int_{\tilde Q} \!\phi(k-u^m)\eta^2dx dt \leq
 || \phi||_{L^{q,r}(\tilde Q) }  \!
 \left( \int _{t_1}^{t_2}\Big| A_{k,R}(t)\Big|^{\frac{\tilde r}{\tilde q}}dt\right)^{\frac{2}{\tilde r}(1 +\kappa)} \!\!.
 \end{aligned}
 \end{equation*}    
 We observe that,  choosing \,$C$\, small enough, we have for a suitable constant \,$ \gamma^*>0$\,
  \begin{equation*}
 \begin{aligned}
& M_{21} -M_{311}-M_{411}\geq
 \gamma^*  \int_{\tilde Q}|\nabla(k-u^m)\eta|^2dx dt.
\end{aligned}
 \end{equation*} 
 
Collecting all the previous inequalities obtained for $M_i, \ i=1,...4,$
   Lemma \ref{EnEst} is proved.\\


\section{ \normalsize{Proof of Lemma \ref{teo4}}} 
\label{cap:Estimate Phi}
First we estimate
$\int_0^{ (k-u^m\!)}  \Phi(\xi) \ d\xi$.\\
Let $\Phi(\xi) =:
  (k-\xi)^{\frac 1m-1 }\xi $
 and $\alpha= 1-\frac1m$. 
 We prove that there exist two  positive constants $\widehat c \leq \frac12,$ \,\,and \,\, $\check{c} \geq  \frac{ m^2}{1+m},$  such that
\begin{equation}
\label{ab2}
\begin{aligned}
 \widehat c \   \frac {(k-u^m)^2}{(\theta_0 + \mu_-)^{\alpha}} \ \ 
\leq \int_0^{ (k-u^m)} \Phi(\xi) \ d\xi  \ \leq  \  \check{c}  \  \frac {  (k-u^m)^2}{( \theta_0 +\mu_-)^{\alpha}}.
\end{aligned}
\end{equation}

 For $0<u^m<k, $  we  derive
\begin{equation*}
\begin{aligned}
& \int_0^{ (k-u^m)} \! \! \Phi(\xi) \ d\xi = \!  \int_0^{ (k-u^m)} [ k(k-\xi)^{- \alpha }
-(k-\xi)^{1- \alpha }] d \xi
\\
&= k \frac { k^{1- \alpha }- (u^m)^{1- \alpha } }{1-\alpha } - \frac{ k^{2- \alpha }- (u^m)^{2- \alpha } } {2-\alpha }.
\end{aligned}
 \end{equation*} 

We introduce the  function 
\begin{equation*}
\begin{aligned}
r(u^m) =: k \frac { k^{1- \alpha }- (u^m)^{1- \alpha } }{1-\alpha } - \frac{ k^{2- \alpha }- (u^m)^{2- \alpha } } {2-\alpha }
 - \widehat c \   \frac {(k-u^m)^2}{(\theta_0 + \mu_-)^{\alpha}},
\end{aligned}
 \end{equation*}
with $r(0) =  (k)^{2- \alpha} \Big( \frac 1{(1-\alpha)(2-\alpha) } - \widehat c \Big)$ and $r(0)>0$, if $\widehat c< \frac 1{(1-\alpha)(2-\alpha) }$. Moreover $r(k)=0$.
 Now we observe that $r'(u^m)<0$ if $\widehat c\leq \frac12$. As a consequence, the function $r(u^m)$, initially positive, is decreasing up to $(k,0)$ and always non negative in
$0<u^m<k$. 
For the inequality on the right of \eqref{ab2}
following the previous steps, we define the function
\begin{equation*}
\begin{aligned}
g(u^m) =: k \ \frac { (k)^{1- \alpha }- (u^m)^{1- \alpha } }{1-\alpha } - \frac{ k^{2- \alpha }- (u^m)^{2- \alpha } } {2-\alpha }
 - \check{c} \   \frac {(k-u^m)^2}{(\theta_0 + \mu_-)^{\alpha}}
\end{aligned}
 \end{equation*}
and $g(k)=0$ and $g(0) < 0$  if  $\check{c} >\frac 1{(1-\alpha)(2-\alpha) }$. Moreover $g$ is decreasing up to its minimum reached at $u^m= \frac{\theta_0+\mu_-} {(2\check{c})^{ }1/\alpha}.
$ Then $g$ is negative and  \eqref {ab2} is proved.\\
Now we are ready to prove Lemma \ref{teo4}.

Let us introduce some technical tools. By translation we assume $(0,\bar t)=(0,0)$.

 Let us  construct a family of nested  cylinders  $Q_{R_i}(\hat \theta)$ with $ R_i= \frac R2+\frac R {2^{i+1}  }, \ i=0,1,2,...$ for which the assumptions of  Lemma \ref{teo4} hold.  After a translation we assume $(0,\bar t)=(0,0)$.
Let $\zeta_i$ be a piecewise cutoff function in $Q_{R_i}(\hat \theta)$ such that
\begin{equation}\label{eq.2}
\left\{ \begin{array}{l}
\begin{aligned}
 &0<\zeta_i (x,t)<1 , \quad  (x,t)\in Q_{R_i}(\hat \theta), \quad 
 \zeta_i=1 , \quad  (x,t)\in Q_{R_{i+1}}(\hat \theta), \\
 &\zeta_i=0, \quad {\rm on \ the \  parabolic \ boundary \ of} \ Q_{R_i}(\hat \theta);\\
 &|\nabla \zeta_i|\leq \frac { 2^{i+1}} { R},  \quad  \ 0\leq (\zeta_i)_t \leq \frac { 2^{2(i+1)}} {  \theta_0^{- \alpha}\ R^2}.
 \end{aligned}
 \end{array} \right.
\end{equation} 
Apply the Energy Estimates over the cylinders $Q_{R_i}(\hat \theta)$ to the truncated  functions $(k_i-u^m)$ 
with 
$k_i= \displaystyle \mu_- + \frac \omega{2^{s_0+1} } + \frac \omega{2^{s_0+1+i} },  \  \ i=0,1,2,...$.
In \eqref{EE} the first term on the left and the second on the right  include the term
$
 \int_0^{ (k-u^m)} \Phi(\xi) \ d\xi, \quad   {\rm with} \ \Phi(\xi)= (k-\xi)^{\frac 1m -1}\xi,
 $ 
which estimate
 from below and  from  above is presented in  Appendix \ref{cap:Estimate Phi}. 
 With $k$ replaced by $k_i,$  by using \eqref{ab2}, we have 
\begin{equation}\label{3Phi}
\int_{K_ {R_i }\times \{t_2\}} \! \! \zeta_i^2 \Big[ \int_0^{ (k_i-u^m)} (k_i-\xi)^{\frac 1m -1 } \xi \ d\xi  \Big] dx 
\geq 
 \widehat c \  \   \frac{ ||\zeta_i(k_i-u^m)||_{L^2 (K_{R_i})}^2}{  (\mu_- +\theta_0)^{\alpha}}.
\end{equation} 
In \eqref{3Phi} we take the sup in time since $t_2$ is arbitrary.
 From  \eqref{EE}  multiplied by  $k_0^{\alpha}=(\mu_- +\theta_0)^{\alpha} $ and using  \eqref {3Phi}, we get
\begin{equation}\label{EE2}
\begin{aligned}
&\underset{-\theta_{0}^\alpha R_i^2 <t<0} \sup  || \zeta_i(k_i-u^m)||_{L^2(K_ {R_i })}^2    + k_0^{\alpha} 
|| |\nabla (\zeta_i (k_i-u^m))||_{L^2(R_i )}^2 \\
& \leq {\bf C}m  k_0^{\alpha} \Big\{ \int^0_{-\theta_0^{-\alpha}R_i^2}\!\!  \int_{K_{R_i}} \!\! (k_i-u^m)^2 |\nabla \zeta_i|^2 \ dx dt \\
&+\int^0_{-\theta_0^{-\alpha}R_i^2}\!\! \int_{K_{R_i}} \!\! \ \Big[ k_0^{-\alpha}  (k_i-u^m)^2
  \Big] |\zeta_i (\zeta_i)_t| dx dt \\
& +\Big(\!\!  \int^0_{-\theta_0^{-\alpha}R_i^2}  | A_{k_i,R_i}(t)|^{\frac {\tilde r } {\tilde q }}  dt \Big)^{\frac {2 } {\tilde r } (1+\kappa)} \Big\} \\
& \leq  {\bf C} m  k_0^{\alpha}  \Big\{ (1+m) \!
 \frac { 2^{2(i+1)} } {R_i^2}\!\!  \frac{\omega^2}{2^{2s_0}}\!\! \ \int_{-\theta_{0}^\alpha R_i^2 }^{0}| A_{k_i,R_i}|(t) dt \\
  &+ \Big(\!\! \ \int^0_{-\theta_0^{-\alpha}R_i^2}  | A_{k_i,R_i}(t)|^{\frac {\tilde r } {\tilde q }}  dt \Big)^{\frac {2 } {\tilde r } (1+\kappa)}\!\! \ \Big\},
  \end{aligned}
\end{equation} 
with $  A_{k_i,R_i}(t)= \{ x \in K_{R_i} : u^m<k_i  \} $.
In order to simplify the computations we perform the following change of time variable in \eqref{EE2}: \ $ z=\theta_0^{\alpha}t$. As a consequence we have
$Q_{R_i}(\hat \theta) \rightarrow  Q_i=Q(R_i,R_i^2); \ 
 u(x,t)  \rightarrow  u(x, \theta_0^{-\alpha} z)={\rm v}(x,z) ; \  \zeta(x,t) \rightarrow  \ \hat \zeta (x,z);  \ \  dt=  \theta_0^{-\alpha} dz.$
We obtain
\begin{equation}
\label{C}
\begin{aligned}
& \underset{(-R_i^2,0)} {\text{ess\,sup}}  || \hat{ \zeta_i}(k_i-{\rm v}^m)||_{L^2(K_ {R_i })}^2    +  
|| \nabla (\hat{\zeta_i} (k_i-{\rm v}^m))||_{L^2(Q_i )}^2 \\
& \leq {\bf C}  m \Bigg\{\! (1+m)  \frac{\omega^2 } {2^{2s_0} } 
 \frac { 2^{2(i+1)} } {R_i^2}  \frac {k_0^\alpha} { \theta_0^\alpha} |Z_i| + \!
 \frac { k_0^{\alpha}  }{ \theta_0^{\alpha(1- \frac 1r)} }
  \Bigg(\! \int^0_{-R_i^2} \! | Z_i(z)|^{\frac {\tilde r} {\tilde q}} dz  \Bigg)^{\frac {2 } {\tilde r } (1+\kappa)} \! \Bigg\},
 \end{aligned}
\end{equation} 
with $Z_i(z)=  \{ x\in K_{R_i}, \ :  {\rm v}^m(x,z)<k_i \}$ and
 $|Z_i|= \int^0_{-R_i^2}  | Z_i(z)| dz.  $
 
 The sequence  $(Z_i)$  is connected with two sequences $(X_i)$ and $ (Y_i)$  so defined:                           
\begin{equation}
\label{defXY}
X_{i} =  \frac{|Z_i| } {|Q_i| } ;  \quad \quad 
Y_{i} = \frac{ \Bigg( \int^0_{-R_i^2}  | Z_i(z)|^{\frac {\tilde r} {\tilde q}}dz \Bigg)^{\frac {2 } {\tilde r }  }} {|K_{R_{i}}| },  
\end{equation} 
which satisfy Lemma \ref{fastconv}.
In fact we prove that
$ X_{i+1} \leq c \ 16^i (X_i^{ 1+\hat \alpha}+ X_i^{\hat\alpha} Y_i^{ 1+\kappa}),
 \ \hat\alpha= \frac 2{ N+2}.$
By  inequality \eqref{sob2}  in  the Sobolev Lemma \ref{sobolev} applied to $(k_i-{\rm v}^m)$ we have
\begin{equation}
\label{Sob1}
\begin{aligned}
& || \hat{ \zeta_i}(k_i-{\rm v}^m)||_{L^2({Q_i })}^2 
  \leq {\bf C} m(\frac{k} {\theta_0})^\alpha   \Bigg\{ \!(1  \! +  \! m)  \frac{\omega^2 \ 2^{2(i+1)} } { 2^{2s_0} R_i^2}     |Z_i|^{1+ \frac 1{N+2} } \\ 
 &+ | Z_i|^{\frac {2} {N+2}}
  \theta_0^{\frac{\alpha}{r} }   \Bigg(\! \int^0_{-R_i^2}  | Z_i(z)|^{\frac {\tilde r} {\tilde q}} dz \!  \Bigg)^{\frac {2 } {\tilde r } (1+\kappa)} \Bigg\} .\\
\end{aligned}
\end{equation} 
Moreover, we have 
\begin{equation}
\label{Sob3}
\begin{aligned}
& \int_{Q_i} \! |\hat  \zeta_i(k_i-{\rm v}^m)|^2 dx dz \\
&\geq \! |k_{i+1}-k_i|^2  \! \! \int^0_{-R_i^2} \! \! 
 | \{ (x,z)\in R_{i+1}: \! {\rm v}^m<k_{i+1} \}| dz
 = \! \Big( \frac {\omega} {2^{s_0+i+2  } }\Big)^2 |Z_{i+1}|,
 \end{aligned}
\end{equation}
from \eqref{Sob1} and  \eqref{Sob3} we derive the following  upper bound of $ |Z_{i+1}|$:
\begin{equation}
\label{Zi+1}
\begin{aligned}
&|Z_{i+1}| \leq  {\bf C} \ m (\frac{ k} {\theta_0})^\alpha  
 \Bigg\{  (1+m) \   \frac { 2^{4(i+2)}} { R_i^2}   |Z_i|^{1+ \frac 2{N+2} } \\
&
+ ( \frac {\omega} {2^{s_0+i+2  } })^{-2} | Z_i|^{\frac {2} {N+2}}
\theta_0^{\frac{\alpha}{r} }   \Bigg( \int^0_{-R_i^2}  | Z_i(z)|^{\frac {\tilde r} {\tilde q}} dz   \Bigg)^{\frac {2 } {\tilde r } (1+\kappa)}\Bigg\}\\
&  
\leq \! c \ 16^i \Bigg \{\frac { 1} { R_i^2}  \! |Z_i|^{1+ \frac 2{N+2}} 
 + \theta_0^{\alpha (\frac1{r}-\frac2{\alpha})} \! | Z_i|^{\frac {2} {N+2}}
 \Big(\! \int^0_{-R_i^2}  | Z_i(z)|^{\frac {\tilde r} {\tilde q}} dz   \Big)^{\frac {2 } {\tilde r } (1+\kappa)} \! \! \Bigg\},
\end{aligned}
\end{equation} 
where the  constant $c$ depends on the data and $r$ and $\tilde r$ in \eqref{eq.rtil}.
Divide    \eqref{Zi+1}    by $|Q_{i+1}| $: in the first term  on the right  we have
$
\frac{|Z_{i}|^{1+ \frac 2{N+2}}  } {R_i^2 R_i^{N+2}}= \frac{|Z_{i}|  } { R_i^{N+2}} \times \frac{|Z_{i}|^{ \frac 2{N+2}}  } {R_i^{(N+2) \frac{2}{N+2}} }
$
and in the second term 
$$
\frac{  | Z_i|^{\frac {2} {N+2}}
 \Big( \int^0_{-R_i^2}  | Z_i(z)|^{\frac {\tilde r} {\tilde q}} dz   \Big)^{\frac {2 } {\tilde r } (1+\kappa)} } { R_i^{N}\times  R_i^{2}}= \frac{|Z_{i}|^{ \frac 2{N+2}}  } {R_i^{(N+2) \frac{2}{N+2}} }\times  
 R_i^{N\kappa}  \ 
 \frac{ \Big( \int^0_{-R_i^2}  | Z_i(z)|^{\frac {\tilde r} {\tilde q}}dz \Big)^{\frac {2 } {\tilde r }(1+\kappa)  }} {(R_i^N)^{1+\kappa} } 
$$
Then
\begin{equation}
\label{Xineq}
X_{i+1}
\leq c \ 16^i\Big\{ X_i^{1+\frac{2}{n+2}} +
\frac{1}{\theta_0^{\alpha(\frac{2}{\alpha}-\frac{1}{r})}}\,R_{i+1}^{(N\kappa)}X_i^{\frac{2}{N+2}}Y_i^{1+\kappa}\Big\}
\end{equation} 

 Recalling that \,\,$ \frac{1}{\theta_0^\alpha}< \frac{1}{R^\epsilon} $\,\, we have\,\, $\frac{1}{\theta_0^{\alpha(\frac{2}{\alpha}-\frac{1}{r})}}< \frac{1}{R^{\epsilon\alpha(\frac{2}{\alpha}-\frac{1}{r})}},$ 
 choosing  $\epsilon $ such that 
 $\epsilon  \alpha(\frac{2}{\alpha}-\frac{1}{r})  < Nk,$
 we obtain
 \begin{equation}
 X_{i+1}\leq c \  16^i \bigg\{ X_i^{1+\frac{2}{N+2}} + X_i^{\frac{2}{N+2}}Y_i^{1+\kappa}\bigg \},
 \end{equation}
 where \,\,$ c$\,\, is a constant independent  of $R,\ \omega, \ A$.\\
Now we prove that   \quad $  Y_{i+1} \leq c \ 16^i (X_i+ Y_i^{ 1+\kappa})$.
\begin{equation*}
\begin{aligned}
 & Y_{i+1}(k_{i}- k_{i+1} )^2=
 \frac{1}{|K_{R_{i+1}}|} \! \Bigg[\! \int_{-R_{i+1}^2}^0 \!\!  (k_{i}-k_{i+1})^{\tilde{r}} \Bigg(\! \int_{ \{ x \in{ R_{i+1}:  {\rm v}^m(x,z)<k_{i+1} \} }}\! dx \!\Bigg)^{\frac{\tilde{r}}{\tilde{q}}} \Bigg]^{\frac{2}{\tilde{r}}}\\
 &\leq
 \frac{1}{|K_{R_{i+1}}|}\Bigg[ \int_{-R_{i+1}^2}^0   \Bigg( \int_{  R_{i+1}}(\hat \zeta_i(k_i -{\rm v}^m))^{\tilde{q}}\,\, dx\Bigg)^{\frac{\tilde{r}}{\tilde{q}}} dz \Bigg]^{\frac{2}{\tilde{r}}}.
 \end{aligned}
\end{equation*}
Then (from $k_{i+1}$ to $k_i$)
\begin{equation}
Y_{i+1} |k_{i+1}-k_i|^2 \leq 
 |K_{ R_{i+1}}|^{-1} \  || \hat \zeta_i( k_{i}-{\rm v}^m)  ||^2_{\tilde q,\tilde r;Q_{i}}.
\end{equation}
Moreover,  we apply \eqref{embed}  in Lemma \ref{embeded} 
 to the truncated $\hat \zeta_i (k_i-{\rm v}^m)$, which is zero on the boundary from the definition \eqref{eq.2} of $\hat \zeta_i$.
Then  from the inequality \eqref{C}
 we have 
\begin{equation}
\begin{aligned}
\label{Y}
&Y_{i+1} |k_{i+1}-k_i)|^2 \leq  |K_{ R_{i+1}}|^{-1}  ||\hat  \zeta_i( k_{i}-{\rm v}^m)  ||^2_{\tilde q,\tilde r;Q_{i}} 
\leq \gamma |K_{ R_{i+1}}|^{-1} ||\hat \zeta_i( k_{i}-{\rm v}^m) ||^2_{V^2(Q_i)} \\
& \leq   |K_{ R_{i+1}}|^{-1} \Big(c_1 |Z_i|+ c_2  \Big( \int^0_{-R_i^2}  | Z_i(z)|^{\frac {\tilde r} {\tilde q}} dz \Big)^{\frac {2 } {\tilde r } (1+\kappa)}\Big) ,
 \end{aligned}
\end{equation}
with $c_1=Cm \gamma   (1+m){ (\frac{\omega } { 2^{s_0 } })^2}
 \  \frac { 2^{2(i+1)}} { R_i^2} , \ c_2=Cm \gamma  \theta_0^{\frac{\hat\alpha}{r}}.$
Then  from \eqref{Y}
\begin{equation}
\begin{aligned}
&Y_{i+1} \leq \tilde c_1 \Bigg\{\frac{\omega^2 2^{2(i+1)} } { 2^{2s_0 }R_i^2  }  \frac 1{\frac {\omega^2} { 2^{2(s_0+i+2 )}} } |K_{ R_{i+1}}|^{-1} |Z_i| \\
&+ 
 \frac{ (\frac{\omega } { 2^{s_0  }})^{\frac {\hat\alpha}{q}}} {( \frac{\omega } { 2^{s_0  }})^2 }  ( 2^{2(i+2 )} ) |K_{ R_{i+1}}|^{-1} \Bigg( \int^0_{-R_i^2}  | Z_i(z)|^{\frac {\tilde r} {\tilde q}} dz \Bigg)^{\frac {2 } {\tilde r } (1+\kappa)} 
  \Bigg \}.
   \end{aligned}
\end{equation}
  We conclude that
$
Y_{i+1} \leq \bar c_1 \ 16^i (X_i+ Y_i^{ 1+\kappa}) .
$
The sequences $ X_i, \ Y_i$ satisfy Lemma \ref{fastconv} with $\rm b=2^{-4}$ and
$$
 X_0+Y_0^{1+k} \leq ( \frac 1{2 \bar c} )^{ \frac{1+k } {\sigma}} (\frac1{16})^{ \frac{1+k } {\sigma^2}}  := \nu<1.
$$
Then $X_i, Y_i \to 0$ as $i \to \infty.$ With such  $\nu$ the hypothesis \eqref{eq.5} of  Lemma \ref{teo4}  holds, then as a consequence, returning to the original coordinate in time, we obtain 
\eqref{eq.1}.


\section{\normalsize{Proof of Lemma \ref{for every nu}  in first Alternative}}
\label{cap:forevery}

First we estimate   $\left( \int _{t^*}^{0}\Big| A_{k,R}(t)\Big|^{\frac{\tilde r}{\tilde q}}dt\right)^{\frac{2}{\tilde r}(1 +\kappa)}.$

 Since for any $ t\in \left[ -t^*,0\right],\ \Big| A_{k,R}(t)\Big|\,\leq |K_R|$ 
we have
\begin{equation}
\begin{aligned}
\label{eq.A^+}
& \left(\! \int _{-t^*}^{0}\Big| A_{k,R}(t)\Big|^{\frac{\tilde r}{\tilde q}}dt\right)^{\frac{2}{\tilde{r}}(1 +\kappa)} 
\leq 
\Big(t^* \left(|K_R|\right)  ^{\tilde r/\tilde q  }\Big)^{ \frac {2(1+ \kappa)} {\tilde r}} \\
&\leq
\Big( |K_R|^{\tilde r/\tilde q  }\Big)^{ \frac {2(1+ \kappa)} {\tilde r}}\Big(\frac{R^2 } {\theta_0^{ \alpha} } \Big)^ {\frac {2(1+ \kappa)} {\tilde r}}
=|K_R|\,R^{N\kappa}\frac{1}{\theta_0^{\alpha(\frac{2(1+k}{\tilde r})}},
\end{aligned}
\end{equation}
where  $\kappa$ satisfies \eqref{eq.rtil}.\\

Moreover, we have 
$
R^{N\kappa}  \theta_0^{-\alpha \frac{2(1+\kappa)}{\tilde r }} < R^{Nk-\varepsilon \frac{2(1+\kappa)}{\tilde r }}.$
Pick \,$ \varepsilon$ so small in order to have\quad$\frac{N\kappa}{\frac{2(1+\kappa)}{\tilde r }}  >2  \varepsilon$. Then
$
N\kappa-\varepsilon \frac{2(1+\kappa)}{\tilde r }  > \varepsilon
 \frac{2(1+\kappa)}{\tilde r }$
and taking into account  that $
(\frac{\omega}{A})^\alpha>  R^\varepsilon,
$
it follows
$  R^{Nk-\varepsilon \frac{2(1+\kappa)}{\tilde r }} < \big(\frac{\omega}{A}\big)^{\alpha \frac{2(1+\kappa)}{\tilde r }}$.
Insert the  last estimate in  \eqref{eq.A^+} to get 
\begin{equation}
\label{3eq.A^+}
 \left(\! \int _{-t^*}^{0}\Big| A_{k,R}(t)\Big|^{\frac{\tilde r}{\tilde q}}dt\right)^{\frac{2}{\tilde{r}}(1 +\kappa)} 
\leq |K_R| \     \Big(\frac{\omega}{A}\Big)^{\alpha  \frac{2(1+k)}{\tilde r}}.
\end{equation}

Now we prove Lemma  \ref{for every nu}.
Let  $s_0$ be  the smallest positive integer  such that \,$ \frac{\omega}{2^{s_0}}\leq {1}$. Let \,$s_1 > s_0 +2$\,  an integer to be stated later.\,
  By Lemma \ref{teo4}, in  \,\,$ K_{\frac{R}{2}}\times (\bar{t} -\hat{\theta},\bar{t} )$, \,\,$ u  > (\mu_- +\frac{\omega}{2^{s_0 +1}})^{\frac{1}{m}}$\,a.e. \\
   Set\, $\hat{k} = (\mu_- +\frac{\omega}{2^{s_0 +1}})^{\frac{1}{m}},\,\, 
   \hat{c}=( \frac{\omega}{2^{s_1}})^{\frac{1}{m}}$.\,\, Define in\,$K_{\frac{R}{2}}\times (\bar{t} -\hat{\theta},0) $ 
\begin{equation}
\hat{\varphi}(u)= ln^+\left( \frac{\hat{H}}{\hat{H} -(\hat{k} -u)_+ + \hat{c}}\right), 
\end{equation}
where \,\,$ \hat{H} =\underset{K_{\frac{R}{2}}\times (\bar{t} -\hat{\theta},0)} {\text{ ess\,sup}}(\hat{k}-u)_+$.\,
 \,\, We have\,\,$\hat{\varphi} (u)\leq ln^+\left( \frac {\hat{H}}{ \hat{c}}\right).$\,\,

 Since 
 $\hat{H}\leq (\mu_- +\frac{\omega}{2^{s_0 +1}} )^{\frac{1}{m}} -\mu_-^{\frac{1}{m}}$,   neglecting $\mu_-^{\frac{1}{m}} $ and  observing that $f(\mu_-)=:(\mu_- \frac{\omega}{2^{s_0 +1}} )^{\frac{1}{m}} -\mu_-^{\frac{1}{m}}$ attains its max value at   $\mu_-=0$, we have
\begin{equation}
\label{log2}
 \hat{\varphi}(u) \leq \frac 1m (s_1-s_0) log 2.
\end{equation}

 Moreover,
  $\hat{\varphi}'(u)= \frac{\partial \hat{\varphi}(u) } { \partial u}= \frac{ -1}{ \hat{H} -(\hat{k} -u)_+ +\hat{c}}$, \ $\hat{\varphi}''(u)= \frac{\partial^2 \hat{\varphi}(u) }{ \partial u^2} = (\hat{\varphi}'(u))^2$ \,\, and by Lemma \ref{teo4}\,\, at the level time $ \bar{t}-\hat{\theta}  $, $\hat{\varphi}(u) = 0$\, in \,$K_{\frac R2}\times\{\bar{t}-\hat{\theta}\}.$\, 
  To prove  Lemma \ref{for every nu}  we consider the definition of weak solution with the Steklov average. 
 Pick as test function $ \psi=(\hat{\varphi}^2(u_h))' \xi^2, $ where $\xi = \xi(x)$ is a cut-off  function with $\xi = 1  \  \text{in}\, K_{\frac R4}, \ \xi =0 \ \text{on}\,  \ \partial K_{ \frac R2}, |\nabla \xi| \leq \frac 8 {R  }$\,\,For any \,$\bar{t}-\hat{\theta}\leq t\leq 0$\,\, understanding  \,$h\rightarrow \,0$, we directly compute 
  \begin{equation}
  \label{h}
\begin{aligned} 
&\iint_{K_{\frac R4 } \times (\bar{t}-\hat {\theta},t)}(u )_t (\hat{\varphi}^2(u))'\xi^2 dx d\tau\  \ = \\
&-\iint_{ {K_{\frac R4 } \times (\bar{t}-\hat {\theta},t)}}(\nabla u^m)\cdot\nabla\left(  (\hat{\varphi}^2((u)' \xi^2 \right)dxd\tau\\
&- \iint_{K_{\frac R4 } \times (\bar{t}-\hat {\theta},t)}(u^{q-1}\nabla v)\cdot\nabla\left(  (\hat{\varphi}^2((u))' \xi^2  \right)dxd\tau \\
&+ \iint_{ { {K_{\frac R4 } \times (\bar{t}-\hat {\theta},t)}}} (C(|\nabla u^m)|^2 +\phi)(\hat{\varphi}^2(u))'\xi^2dxd\tau\\
&
:=-\int_{\bar{t}-\hat\theta}^{0}\hat{I}dt\,+\int_{\bar{t}-\hat\theta}^{0} \hat{J} dt + \int_{\bar{t}-\hat\theta}^{0}\hat{L}_1 dt+ \int_{\bar{t}-\hat\theta}^{0} \hat{L}_2 dt.
\end{aligned}
\end{equation}
 For the first term on the left hand side
\begin{equation}
\label{h1}
\begin{aligned}
&\iint_{K_{\frac R4 } \times (\bar{t}-\hat {\theta},t)}((\hat{\varphi}(u) \xi)^2)_t dxdt =
\int_{K_{\frac R4 } \times\{t\}}((\hat{\varphi}(u) \xi))^2 dx.\\  
 \end{aligned}
\end{equation}
 
To obtain an estimate from below of the term on the right in \eqref{h1},
let us  integrate in the smaller set
$$
\hat{P}=: \{ x\in  K_{R/4} : u(x, t)< (\mu_- + \frac {\omega}{2^{s_1} })^{\frac{1}{m}}  \},  \ \ t\in(\bar{t}-\hat\theta, 0). $$ 

In such set $\hat{P}$,  
we have
 \,$$ \hat{\varphi} (u)\geq  ln^+\frac{\hat{H}}{\hat{H}-\hat{k}+(\mu_- + \frac{\omega}{2^{s_1}})^{\frac{1}{m}}+\hat{c}}.
 $$ 
 In this inequality, we apply $ (\mu_- + \frac{\omega}{2^{s_1}})^{\frac{1}{m}} \leq (\mu_-)^{\frac{1}{m}}  + (\frac{\omega}{2^{s_1}})^{\frac{1}{m}} .   $
 
 Moreover, the right hand side is a decreasing function in  \,$\hat{H}$.  Then for\,\,$t\in (\bar{t}- \hat \theta,0) $, with $\hat H \leq \hat k - \mu_-^ {1/m},$ we have
\begin{equation}
 \hat{\varphi}(u)\geq  \ln^+\frac{\hat{k}-\mu_-^{\frac{1}{m}} }{2\hat{c}}
\end{equation} 

 Then
\begin{equation} \label{log3} 
 \int_{\hat{P}}\hat{\varphi}^2 (u)  dx \geq  \frac{\hat C}{m^2} (s_1-s_0-1)^2 \ln^22 \  |\hat{P}|,
\end{equation} 
with $\hat C$ is a constant depending on $||u||_{L^{\infty}}$.
Let us estimate the terms on the right hand side of \eqref{h}.

\begin{equation*}
 \begin{aligned}
 & -\hat{I} =    
  -\int_{K_{\frac R4}}m u^{m-1}\nabla u \cdot\left[ ( \, 2((\nabla\hat{\varphi})\hat{\varphi}' \xi^2  
  + 2\hat{\varphi}(\nabla\hat{\varphi}')\xi^2\right.\\
  &\left. +4\hat{\varphi}\ \hat{ \varphi}'\xi\nabla\xi  \right] dx \leq 
 -2m\int_{K_{\frac R4}}u^{m-1} (1+\hat{\varphi})\hat{\varphi}'^{2}\xi^2\, |\nabla u|^2dx \\ &+4m\int_{K_{\frac R4}}u^{m-1}|\nabla u||\hat{\varphi}\hat{\varphi}'|\xi\,|\nabla \xi| dx 
 =\, -\hat{I}_1 +\hat{I}_2     
  \end{aligned}
  \end{equation*}
  
Moreover, by Young inequality we have
\begin{equation*}
  \begin{aligned}
&\hat I_2 =4m\int_{K_{\frac R4}}u^{m-1}\varphi(\frac{1}{\sqrt{2}}\varphi'|\nabla u|)(\sqrt{2}\nabla\xi) \leq 4m\int_{K_{\frac R4}}u^{m-1}\varphi\left[ \frac{1}{4}\varphi'^2\xi^2\,|\nabla u|^2| +| \nabla \xi|^2\right]  dx \\
&= m\int_{K_{\frac R4}} \!\! u^{m-1} \varphi\varphi'^{2}\xi^2 |\nabla u|^2dx + 4m\int_{K_{\frac R4}}u^{m-1}  \varphi\, |\nabla \xi|^2dx =\hat I_{21} +\hat I_{22}\\
\end{aligned}
  \end{equation*} 
  
\begin{equation*}
 \begin{aligned}
& \hat J \!=\! \int_{K_{\frac R4}} \! u^{\mathfrak q-1}\nabla v \cdot\left[  2((\nabla\varphi)\varphi' \xi^2
 + 2\varphi(\nabla\varphi')\xi^2 +4\varphi \varphi'\xi\nabla\xi  \right]dx
 \\
& \leq 2\int_{K_{\frac R4}}u^{\mathfrak q-1}(1+ \varphi)\varphi'^{2} \xi^2|\nabla u||\nabla v|dx 
+4\int_{K_{\frac R4}} u^{\mathfrak q-1}\varphi|\varphi'|\xi |\nabla v||\nabla \xi|dx \\
&=\hat J_1 + \hat J_2 .
\end{aligned}
\end{equation*}
By hypothesis on $ \mathfrak q  $, we can
split\,$\mathfrak q-1$ in the sum of two positive term of the form\,
$\mathfrak q-1= \frac{m-1 }2 + \mathfrak q-\frac{m+1 }2.
$
Using again  Young's inequality  in $\hat J_1$ we obtain
 \begin{equation*}
\begin{aligned}
&\hat J_1\leq
\int_{K_{\frac R4}} \!\! (1+ \varphi)\varphi'^{2} u^{ m-1}|\xi^2|\nabla u|^2 dx+  \!\! \int_{K_{\frac R4}}  \!\! (1+\varphi)\varphi'^{2} \xi^2 u^{ 2q-m-1 } |\nabla v|^2 dx\\
&=\hat J_{11} + \hat J_{12},
\end{aligned}  
 \end{equation*}

 \begin{equation*}
\begin{aligned}
&\hat J_2 =
4\int_{K_{\frac R4}} \varphi\left[ (|\varphi'|u^{\mathfrak q-1}\xi |\nabla v|)(|\nabla \xi|)\right]dx  \leq\\
&2\int_{K_{\frac R4}} \varphi\varphi'^2 u^{2\mathfrak q-2} \xi^2 |\nabla v|^2 dx +2\int_{K_{\frac R4}}  \varphi|\nabla \xi|^2 dx
 = \hat J_{21} + \hat J_{22}
\end{aligned}  
 \end{equation*}
Note  that
$  -\hat I +\hat J +\hat L_1+\hat L_2 \leq 
  (-\hat I_{1} +\hat J_{11} +\hat I_{21}+\hat L_1)+(\hat I_{22}+\hat J_{22})$\\$ + (\hat J_{12}+ \hat J_{21}+\hat L_2) .$
\quad  Since\,$\hat L_1\leq 0 $\,
  \begin{equation*}
\begin{aligned}
&-\hat I_{1} + \hat J_{11} +\hat I_{21}+\hat  L_1 \! \leq - (m -1) \int_{K_{\frac R4}}u^{m-1} \varphi'^{2}\xi^2\, |\nabla u|^2dx   < 0 , 
\end{aligned}  
 \end{equation*}
   a negative term that can be neglected.
\begin{equation}
\label{I2final}
\begin{aligned}
&\hat I_{22}+ \hat J_{22}=
4m \int_{K_{\frac{R}{4}}} u^{m-1}\hat  \varphi |\nabla 
\xi |^2 dx +2 \int_{K_{\frac{R}{4}}}  \hat \varphi|\nabla \xi|^2 
< C_1 \int_{K_{\frac{R}{4}}} \hat  \varphi\, |\nabla \xi|^2dx\\
&\leq  C_1 (s_1-s_0-1)\ln 2 \frac {2^6 }{R^4} \  \Big( \int_{-\theta}^{0}( |A_{k, \frac R4}|^{\frac {\tilde r} {\tilde q}}dt \Big)^{\frac{2(1+k)} {\tilde r}},
\end{aligned}  
 \end{equation}
 with $ C_1=4m\mu_+^{m-1}+2.$ \\
 
Now we can estimate 
 \begin{equation}
 \label{J1}
\begin{aligned}
&\hat J_{12}+\hat J_{21}=\\
 &  \int_{K_{\frac R4}}\!\! (1+\hat \varphi)\hat \varphi'^{2} u^{2\mathfrak q-2}\xi^2|\nabla v|^2dx 
+2\int_{K_{\frac R4}} \!\! \hat \varphi \hat \varphi'^2 u^{2\mathfrak q-2} \xi^2 |\nabla v|^2dx \\
&< 3  \int_{K_{\frac R4}} u^{2\mathfrak q-2}(1+\hat \varphi)\hat \varphi'^2  \xi^2 |\nabla v|^2dx
\end{aligned}  
 \end{equation}
 An integration in time  of \eqref{J1}  yields
 \begin{equation}
 \label{1J1}
\begin{aligned}
&\int_{-\hat \theta }^0 ( \hat J_{12}+\hat J_{21})dt  \\
& \leq \mu_+^{\frac{ 2\mathfrak q-2} { m} } \frac{3}{m}(s_1 - s_0 )\ln2 \left( \frac{2^{s_1}}{\omega}\right)^2 \!\!  \iint_{Q_{\frac R4}(\hat \theta)} \!\!\xi^2 |\nabla v|^2dx dt,   \\
\end{aligned}  
\end{equation}
 where the estimate  $ 1+\varphi <  \frac{1}{m}(s_1 - s_0  ) \ln 2$ is used, thanks to   \eqref{log2}.

Following  the details in computing   \eqref{m322}  and \eqref{gradv} there exists   a positive constant $ I_u$ such that
\begin{equation}
\label{Itilde}
\begin{aligned}
\iint_{Q_{\frac R4}(\hat \theta)} \xi^2 |\nabla v|^2dxdt \leq 
 I_u\Big( \int_{-\theta}^{0}( |A_{k, \frac R4}|^{\frac {\tilde r} {\tilde q}}dt \Big)^{\frac{2(1+k)} {\tilde r}}.
\end{aligned}
\end{equation}

Thus inserting \eqref{Itilde}  in \eqref{1J1} ,    we have  \begin{equation}
\label{J12}
\begin{aligned}
&\int_{-\hat\theta}^{0}(\hat J_{12}+ \hat J_{21})dt \\
&\leq \mu_+^{\frac{ 2\mathfrak q-2} { m} } \frac{3}{m}(s_1 - s_0  )\ln2 \ \left( \frac{2^{s_1}}{\omega}\right)^2  I_u \ 
 \Big( \int_{-\hat \theta}^{0}( |A_{k, \frac R4}|^{\frac {\tilde r} {\tilde q}}dt \Big)^{\frac{2(1+k)} {\tilde r}}.
\end{aligned}
\end{equation}

Now we estimate $\hat L_2.$
 Since $ \phi \in L^{q,r}_{\mathbb{R}^N\times (t>0)},$  applying H\"older inequality, the following estimate holds
\begin{equation}
\label{L2}
\begin{aligned}
&\int_{-\hat\theta}^{0}\hat L_2 dt \leq 2 \big[  (s_1-s_0)  \ln 2 \big(\frac{2^{s_1} } {\omega} \big)\big]
 ||\phi||_{L^{q,r} _{Q_{\frac R4}  (\hat \theta)} } \!\!
 \Big( \int_{-\hat \theta}^{0}( |A_{k, \frac R4}|^{\frac {\tilde r} {\tilde q}}dt \Big)^{\frac{2(1+k)} {\tilde r}}.
 \end{aligned}
\end{equation}

Using the estimate \eqref{3eq.A^+} applied to $K_{\frac R4}$
and adding  \eqref{J12} with    \eqref{L2} , we obtain
 \begin{equation}
 \label{J2final}
\begin{aligned}
&\int_{-\hat\theta}^{0}(\hat J_{12}+\hat  J_{21}+L_2)dt \\
&\leq (s_1 - s_0)\ln 2  \frac{2^{s_1}}{\omega} 
\Big(  \mu_+^{\frac{2q-2}m} \frac 3m  \frac{2^{s_1}}{\omega} I_u+2
  ||\phi||_{L^{q,r}_{Q_{\frac R4} }} \Big)
|K_{\frac R4}|\,\Big(\frac{\omega}{A}\Big)^{\frac{2 \alpha(1+k) } {\tilde r }}
\end{aligned}  
\end{equation}

Inserting   \eqref{log3}, \eqref{I2final} and   \eqref{J2final} into \eqref{h}, we obtain
 \begin{equation}
 \label{ln}
\begin{aligned}
&\Big(\frac{1}{2m}\Big)^2 (s_1-s_0)^2  \ln^22 \  |\hat{P}|\\
&\leq 
2m C_1 (s_1-s_0) \ln2 \frac{2^6 } {R^2 }|K_{\frac R4}|\,\Big(\frac{\omega}{A}\Big)^{\frac{2 \alpha(1+k) } {\tilde r }}\\
&  
+(s_1 - s_0)\ln 2  \frac{2^{s_1}}{\omega} 
\Big(  \mu_+^{\frac{2q-2 }m} \frac 3m  \frac{2^{s_0}}{\omega}  I_u+2
  ||\phi||_{L^{q,r}_{Q_{\frac R2} }} \Big)
|K_{\frac R4}|\,\Big(\frac{\omega}{A}\Big)^{\frac{2 \alpha(1+k) } {\tilde r }}
\end{aligned}  
 \end{equation}
 
Dividing by $\frac{\hat C}{m^2} (s_1-s_0)^2 \  log^2 2 $,
choosing $A$  and $s_1$  sufficiently large, we conclude
 
 \begin{equation}
\begin{aligned}
&|\hat P|  \leq 
\Bigg\{\frac {2m^3 C_1  \frac{2^6 } {R^2 }}{\hat C(s_1-s_0) \ln2}\\
&  
+\frac {(m)^2 \frac{2^{s_1}}{\omega} }{\hat C(s_1 - s_0)\ln 2 }
\Big(  (\mu_+)^{\frac{2q-2 }m} \frac 3m  \frac{2^{s_1}}{\omega} I_u +2
  ||\phi||_{L^{q,r}_{Q_{\frac R2} }} \Big)\Bigg\}
|K_{\frac R4}|\,\Big(\frac{\omega}{A}\Big)^{\frac{2 \alpha(1+k) } {\tilde r }}\\
& =\nu_1 |K_{\frac R4}|.
\end{aligned}  
 \end{equation}


\section{\normalsize {Estimate \eqref{IJL1L2}   in  Lemma \ref{log} in the $2^{nd}$ alternative }}
\label{cap:stimaIJL1L2}
Our aim is to estimate $ \int_{t^*}^{0}( -I +J +L_1+L_2) dt$.  
\begin{equation*}
 \begin{aligned}
 & -I =    
  -\int_{K_{ R}}m u^{m-1}\nabla u \cdot\left[ ( 2m(m-1)u^{m-2}(\nabla u) \varphi\varphi'\xi^2\,+\, 2mu^{m-1}((\nabla\varphi)\varphi' \xi^2\right.\\  
  &\left. + 2mu^{m-1}\varphi(\nabla\varphi')\xi^2 +4mu^{m-1}|\nabla u|\varphi \varphi'\xi\nabla\xi  \right] dx \leq -2(m-1)\int_{K_{ R}} u^{-1}(\nabla u^m|)^2 \varphi\varphi'\xi^2 dx \\
 &-2m\int_{K_{ R}}u^{m-1} (1+\varphi)\varphi'^{2}\xi^2\, |\nabla u^m|^2dx +4m \!\int_{K_{ R}}u^{m-1}|\nabla u^m|\varphi\varphi'\xi\,|\nabla \xi| dx= -I_1 -I_2 + I_3.
  \end{aligned}
  \end{equation*}
  
Moreover, by Young inequality we have
\begin{equation*}
  \begin{aligned}
& I_3 =4m\! \int_{K_{ R}}\! \!u^{m-1}\varphi(\frac{1}{\sqrt{2}}\varphi'\nabla u^m)(\sqrt{2}\nabla\xi) \! dx \leq 4m\int_{K_{ R}}u^{m-1}\varphi\left[ \frac{1}{4}\varphi'^2\xi^2\,|\nabla u^m|^2 +| \nabla \xi|^2\right] \!dx =\\
&m\int_{K_{ R}} \!\! u^{m-1} \varphi\varphi'^{2}\xi^2 |\nabla u^m|^2dx + 4m\int_{K_{ R}}u^{m-1}  \varphi\, |\nabla \xi|^2dx =I_{31} +I_{32}\\
\end{aligned}
  \end{equation*} 
\begin{equation*}
  \begin{aligned}
& J \!=\! \int_{K_{R}} \! u^{q-1}\nabla v\cdot\left[ 2m(m-1)u^{m-2}(\nabla u) \varphi\varphi'\xi^2+ 2mu^{m-1} (\nabla\varphi) \varphi' \xi^2\right.\\
&\left. + 2mu^{m-1}\varphi(\nabla\varphi')\xi^2 +4mu^{m-1}\varphi \varphi'\xi\nabla\xi  \right]dx \leq
2(m-1)\int_{K_{R}} \!\! u^{\mathfrak q-2}\varphi\varphi'\xi^2|\nabla u^m||\nabla v|dx \\
&+2m\int_{K_{R}}u^{\mathfrak q+m-2}(1+ \varphi)\varphi'^{2} \xi^2|\nabla u^m||\nabla v|dx 
+4m\int_{K_{R}} u^{\mathfrak q+m -2}\varphi\varphi'\xi |\nabla v||\nabla \xi|dx \\
&=J_1 + J_2 +J_3.
\end{aligned}
\end{equation*}
Using  Young's inequality and the fact that $\varphi'>1$
\begin{equation*}
\begin{aligned}
& J_1 
\leq (m-1)\int_{K_{R}}u^{-1}\varphi\varphi'\xi^2|\nabla u^m|^2dx +(m-1)\int_{K_{R}}\varphi\varphi'^2\xi^2|\nabla v|^2 u^{2\mathfrak q -3}dx =\\
& J_{11} + J_{12}.
 \end{aligned}  
 \end{equation*}
Using   Young's inequality  in $J_2$ we obtain
 \begin{equation*}
\begin{aligned}
&J_2\leq
2m\int_{K_{R}}  u^{m-1}(1+ \varphi)(\varphi^{2})' \xi^2 \! \left[\frac{1}{2} |\nabla u^m|^2 + \frac{1}{2}u^{2\mathfrak q-2}|\nabla v|^2\right] \\
&=m\int_{K_{R}} \!\! u^{m-1}(1+ \varphi)(\varphi^{2})' \xi^2|\nabla u^m|^2 + m \!\! \int_{K_{R}}  \!\! u^{m}(1+\varphi)\varphi'^{2} u^{2\mathfrak q-3}\xi^2|\nabla v|^2\\
&=J_{21} + J_{22},
\end{aligned}  
 \end{equation*}
 \begin{equation*}
\begin{aligned}
&J_3 =
4m\int_{K_{R}} u^{m -1}\varphi\left[ (\varphi'u^{\mathfrak q-1}\xi |\nabla v|)(|\nabla \xi|)\right] \leq\\
&2m\int_{K_{R}} u^{m }\varphi\varphi'^2 u^{2\mathfrak q-3} \xi^2 |\nabla v|^2 +2m\int_{K_{R}} u^{m-1 }\varphi|\nabla \xi|^2 
 = J_{31} + J_{32}
\end{aligned}  
 \end{equation*}
Note  that
$  -I +J +L_1+L_2 \leq 
(-I_{1} +J_{11} ) + (-I_{2} +J_{21} +I_{31}+L_1)$\\$+(I_{32}+ J_{32}) + (J_{12}+ J_{22}+J_{31})+L_2 .$\\
Now we see that
 \begin{equation*}
\begin{aligned}
&-I_{1} +J_{11}= -(m-1)\int_{K_{R}} u^{-1}|\nabla u^m|^2 \varphi\varphi'\xi^2 dx<0
\end{aligned}  
 \end{equation*}
and  

  \begin{equation*}
\begin{aligned}
&-I_{2} + J_{21} +I_{31}+ L_1 \! \leq - m   (1- 2C) \int_{K_{R}}u^{m-1} \varphi'^{2}\xi^2\, |\nabla u^m|^2dx, 
\end{aligned}  
 \end{equation*}
 and taking  \,\,$ C < 1/2 $, we  obtain a negative term that can be neglected.

 \begin{equation}
\begin{aligned}
\label{I32J}
&I_{32}+ J_{32}=
4m\int_{K_{R}}u^{m-1}  \varphi\, |\nabla \xi|^2dx +2m\int_{K_{R}} u^{m-1 }\varphi|\nabla \xi|^2 \\
&=6m\int_{K_{R}}u^{m-1}  \varphi\, |\nabla \xi|^2dx. 
\end{aligned}  
 \end{equation}
 \begin{equation*}
\begin{aligned}
\label{J}
&J_{12}+ J_{22}+J_{31}\\
&=(m-1)\int_{K_{R}} \!\! \! \varphi\varphi'^2 u^{2\mathfrak q -3} \xi^2|\nabla v|^2 dx +\!  m \!\int_{K_{R}}\!\! u^{m}(1+\varphi)\varphi'^{2} u^{2\mathfrak q-3}\xi^2|\nabla v|^2dx \\
&+2m\int_{K_{R}} u^{m }\!\! \varphi\varphi'^2 u^{2\mathfrak q-3} \xi^2 |\nabla v|^2dx < 3m\int_{K_{R}}\varphi\varphi'^2 u^{2\mathfrak q -3} \xi^2|\nabla v|^2 dx \\
&+ 3m\int_{K_{R}}\!\! \!u^{m }(1+\varphi)\varphi'^2 u^{2\mathfrak q-3} \xi^2 |\nabla v|^2dx\leq 3m\int_{K_{R}} \!\! \!(1+u^{m })(1+\varphi)\varphi'^2 u^{2\mathfrak q-3} \xi^2 |\nabla v|^2dx.
\end{aligned}  
 \end{equation*}
From the  last inequalities, taking in account \eqref{eq.1} and \eqref{log2}
 we deduce
 \begin{equation*}
\begin{aligned}
&\int_{Q_{R}(t^*)} (1+u^{m })(1+\varphi)\varphi'^2 u^{2\mathfrak q-3} \xi^2 |\nabla v|^2dx dt \leq \\
&(1+\mu_+)(1+(s_1 - s_0 )\ln2)\left( \frac{2^{s_1}}{\omega}\right)^2 \!\! (\mu_+)^{\frac{2\mathfrak q-3}{m}} \int_{Q_{R}(t^*)} \!\!\xi^2 |\nabla v|^2dx dt.  \\
\end{aligned}  
\end{equation*}
Following  the details in computing  \eqref{gradv}, there exists  $I_u$ such that
\begin{equation*}
\label{ineqA }
\begin{aligned}
\int_{Q_{R}(t^*)} \xi^2 |\nabla v|^2dx dt \leq 
 I_u \left( \int _{t^*}^{0}\Big| A_{k,R}(t)\Big|^{\frac{\tilde{r}}{\tilde{q}}}dt\right)^{\frac{2}{\tilde{r}}(1 +\kappa)}
\end{aligned}
\end{equation*}
where $\Big| A_{k,R}(t)\Big|= \Big|\left\lbrace x \in K_{R}: u^m >\mu_+ \ -  \frac{\omega}{2^{s_1}}\right\rbrace \Big|.$ 

Thus  by using \eqref{eq.A^+} and  \eqref{eq.rtil} we have 
\begin{equation*}
 \begin{aligned}
 &\int_{t^*}^{0}(J_{12}+ J_{22}+J_{31})dt \\
 & \leq 3m I_u  (1+\mu_+ )(1+(s_1 - s_0 )\ln2) \frac{2^{2s_1}}{\omega^2}  \mu_+^{\frac{2\mathfrak q-3}{m}}|K_{R}|\,\left(\frac{\omega}{A}\right)^{\alpha(1-\frac{1}{r})}.
 \end{aligned}
\end{equation*}
Also we have  by \eqref{I32J}
 \begin{equation}
 \label{I32}
\begin{aligned}
&\int_{t^*}^{0}(I_{32}+ J_{32})dt
 \leq 6m(s_1 -s_0 )\ln2\,(\mu_+)^{m-1}\gamma \ |K_{R}| \  \theta_0^{-\alpha}\\
&\leq 6 \tilde \gamma m  \ \frac{(2^{s_0})^\alpha(s_1 -s_0)\ln2\,\mu_+^{m-1}}{\omega^\alpha }\  \Big| K_{R} \Big|.
\end{aligned}  
 \end{equation}
Now we estimate $L_2.$
 Since $ \phi \in L^{q,r}( \mathbb{R}^N\times (t>0)),$  applying H\"older inequality we have
\begin{equation}
\label{L2sec}
\begin{aligned}
&\int_{t^*}^{0}L_2dt\leq 2m [ \mu_+^{m-1} (s_1-s_0)  ln2 (\frac{2^{s_1} } {\omega})]
 ||\phi||_{{q,r,Q_{R}  (\hat \theta)} } 
 \Big( \int_{-\theta}^{0}( |A_{k,R}|^{\frac {\tilde r} {\tilde q}}dt \Big)^{\frac{2(1+k)} {\tilde r}}.
 \end{aligned}
\end{equation}

Adding \eqref{L2sec} to \eqref{I32}, 
 \begin{equation*}
\begin{aligned}
&\int_{t^*}^{0}(J_{12}+ J_{22}+J_{31}+L_2)dt \\
&\leq 3m\,I_u(1+\mu_+ )
(1+(s_1 - s_0)\ln2 )\Big( \frac{2^{s_1}}{\omega}\Big)^2  \mu_+^{\frac{2\mathfrak q-3}{m}}  |K_{R}|\,\Big(\frac{2^{s_0}}{A}\Big)^{\frac{2 \alpha(1+k) } {\tilde r } } 
 \\
&+ 2m \Big[ \mu_+^{m-1} (s_1-s_0)\ln2 \Big( \frac{2^{s_1} } {\omega}\Big)\Big] \
 ||\phi||_{{q,r, Q_{ R}  (\hat \theta)} } \Big( \frac{2^{s_0} }{ A}\Big)^{\frac{2 \alpha(1+k) } {\tilde r } } |K_{R}|. 
\end{aligned}  
 \end{equation*}
At the end we obtain the final estimate
  \begin{equation*}  
\begin{aligned} 
&\int_{t^*}^{0}( -I +J +L_1+L_2) dt\\
&\leq 3m\,I_u(1+\mu_+ )
(1+(s_1 - s_0)\ln2 )\Big( \frac{2^{s_1}}{\omega}\Big)^2 \mu_+^{\frac{2\mathfrak q-3}{m}} \Big(\frac{2^{s_0}}{A}\Big)^{\frac{2 \alpha(1+k) } {\tilde r } } |K_R| \\
&+ 2m \Big[ \mu_+^{m-1} (s_1-s_0)\ln 2 \Big( \frac{2^{s_1} } {\omega}\Big) \Big] || \phi||_{q,r,Q_{R}(\hat \theta)} \ \Big( \frac{2^{s_0} }{ A}\Big)^{\frac{2 \alpha(1+k) } {\tilde r } }|K_R|
\\
&+ 6m \gamma \  \frac{(2^{s_0})^\alpha}{ \omega^\alpha}  \frac{(s_1 -s_0 )\ln2 \  \mu_+^{m-1}}{ \sigma^2} \  |K_R|. 
\end{aligned}  
\end{equation*}

\section*{Acknowledgments} The authors gratefully thank the anonymous reviewer for his helpful remarks  and suggestions, which greatly improved the clarity of  this paper.
The authors are members of the Gruppo Nazionale per l'Analisi Matematica, la Probabilit\`a e le loro Applicazioni (GNAMPA) of the Istituto Na\-zio\-na\-le di Alta Matematica (INdAM). M.M. is partially supported by the research project {\it Evolutive and Stationary Partial Differential Equations with a focus on biomathematics} (Fondazione di Sardegna 2019) and by the grant PRIN n.PRIN-2017AYM8XW: {\it Non-linear Differential Problems via Variational, Topological and Set-valued Methods}.

\end{document}